\documentclass[11pt, a4paper]{article}

\usepackage[T1]{fontenc}
\usepackage[utf8]{inputenc}
\usepackage{mathpazo}
\usepackage{amsmath, amssymb, amsthm}
\usepackage{geometry}
\usepackage{hyperref}
\usepackage{booktabs}
\usepackage{array}
\usepackage{tabularx}
\usepackage{microtype}
\usepackage{enumitem}
\usepackage{xcolor}
\usepackage{titlesec}
\usepackage{graphicx}
\usepackage{tikz}
\usetikzlibrary{arrows.meta, calc, patterns}

\hypersetup{colorlinks=true, linkcolor=black, citecolor=black, urlcolor=blue}

\theoremstyle{plain}
\newtheorem{theorem}{Theorem}[section]
\newtheorem{proposition}[theorem]{Proposition}

\newtheorem{corollary}[theorem]{Corollary}
\newtheorem{conjecture}[theorem]{Conjecture}
\newtheorem*{conjecture*}{Conjecture}
\newtheorem{heuristic}[theorem]{Heuristic Proposal}

\theoremstyle{definition}
\newtheorem{definition}[theorem]{Definition}

\newtheorem{remark}[theorem]{Remark}

\newcommand{\R}{\mathbb{R}}
\newcommand{\GL}{\mathrm{GL}}

\newcommand{\so}{\mathfrak{so}}
\newcommand{\Hol}{\mathrm{Hol}}
\newcommand{\Om}{\Omega}
\newcommand{\Sig}{\mathcal{A}}
\newcommand{\Gam}{\Gamma}
\newcommand{\del}{\partial}
\newcommand{\norm}[1]{\left\|#1\right\|}

\newcommand{\Gr}{\mathcal{G}}   % Green's function (distinct from structure group G)

\titleformat{\section}{\large\bfseries}{\thesection.}{0.5em}{}
\titleformat{\subsection}{\normalsize\bfseries\itshape}{\thesubsection.}{0.5em}{}

\begin{document}
% ================================================================

\title{From Gradient Descent to Potential Theory:\\
A Geometric Dictionary for Binary Classification}

\author{Catalin Vasii\thanks{Ness Digital Engineering.
Email: \texttt{cvasii23@gmail.com}}}

\date{}

\maketitle

\begin{abstract}
We propose a precise \textbf{dictionary} between binary classification in machine
learning and classical potential theory on vector bundles. Classifiers are
parallel sections of vector bundles over the data space; training labels are
Dirichlet boundary conditions on the section; the kernel of an RKHS interpolant
is the Green's function of an elliptic operator on the data manifold; and
backpropagation is the flat-geometry limit of an exact geometric problem
(\S\ref{subsec:backprop}).

The central identification is this. For $L = (\Delta_g + \kappa^2)^\nu$ with
$\nu > n/2$, consider the $L$-harmonic interpolation boundary value problem:
find the minimum-$H^\nu$-norm classifier satisfying $Lf = 0$ away from the data,
subject to $f(x_i) = y_i \cdot r$. This is precisely the problem that RKHS
interpolation and kernel ridge regression already solve. The representer theorem
of Kimeldorf and Wahba \cite{kimeldorf1971some} and the SPDE--Green's function
correspondence of Lindgren, Rue, and Lindstr\"om \cite{lindgren2011spde} supply
the analytic content; what we add is the potential-theoretic reading
(Theorem~\ref{thm:svm-ym}): the kernel is the Green's function of $L$, the
interpolation coefficients are electrostatic capacitances, and the decision
boundary is the zero equipotential of the resulting field. The condition
$\nu > n/2$ is essential and excludes the Dirichlet energy ($\nu = 1$) in
dimension $n \geq 2$, where point data has zero capacity. The hard-margin SVM is
a margin-constrained variant of the same object (Corollary~\ref{cor:svm-ym}).

Two further results. For finite data on any smooth manifold, flat $O(2)$
solutions always exist (Theorem~\ref{thm:flat}); and the density of $O(2)$
$L$-harmonic interpolants among continuous classifiers
(Theorem~\ref{thm:universal}) is the universal kernel theory of Steinwart
\cite{steinwart2001} and Micchelli--Xu--Zhang \cite{micchelli2006} in geometric
form. Over non-contractible bases the geometry becomes load-bearing: the parity
invariant on $S^1$ and the Euler class on $S^2$ are obstructions no choice of
classifier can evade. Elsewhere the connection is determined by the section and
carries no independent information; making it an independent field with its own
energy is the subject of a companion paper.
\end{abstract}

\noindent\textbf{2020 Mathematics Subject Classification.}
68T07 (Artificial neural networks and deep learning),
53C05 (Connections, general theory),
53C07 (Yang-Mills and other gauge theories),
68T05 (Learning and adaptive systems in artificial intelligence),
46E22 (Hilbert spaces with reproducing kernels),
35J08 (Green's functions for elliptic equations).

\noindent\textbf{Key words and phrases.}
binary classification, vector bundles, connections, potential theory, Green's
function, capacitance, $L$-harmonic interpolation, reproducing kernel Hilbert
space, Mat\'ern kernel, support vector machine, kernel ridge regression,
Euler class, persistent homology, geometric deep learning.

\bigskip

\bigskip

% ================================================================
\section{Introduction}
% ================================================================

The dominant paradigm in machine learning treats classification as an optimisation
problem: minimise a loss functional over a space of parameters. The geometry
underlying this process is typically Riemannian --- gradient descent follows the
metric on parameter space --- but the classifier itself is rarely given a geometric
interpretation.

In this paper we propose a different geometric frame. We observe that in both the
support vector machine (SVM) --- a maximum-margin classifier whose decision function
is defined via a kernel $K$ and support vector weights --- and the one-hidden-layer
neural network without activation or bias --- where the classifier is the linear map
$f(x) = W_2 W_1 x$ --- the classifier arises from a \emph{section} of a vector
bundle over the data space $M$. This is a new \emph{framing}, not a new object:
SVMs and neural networks are well-understood, but their identification as sections
of vector bundles with the training problem as a Dirichlet BVP --- and the resulting
geometric dictionary with Riemannian geometry and potential theory --- is the
contribution. Learning is reformulated as finding a \emph{connection} on that
bundle for which the section is horizontal.

Before stating the principles, we briefly recall the geometric objects involved,
for the reader's convenience.

A \textbf{vector bundle} $E \to M$ is a family of vector spaces (the
\emph{fibers}) parameterised smoothly by points of the base space $M$. In our
setting $M \subseteq \R^n$ is the data space and each fiber $E_x \cong \R^m$ is a
copy of $\R^m$ sitting above the data point $x \in M$. A \textbf{section}
$s: M \to E$ is a smooth choice of one vector $s(x)$ in each fiber --- it assigns
a hidden representation to every data point. The \textbf{structure group}
$G \subseteq \GL(m)$ is the group of linear automorphisms of the fiber regarded
as symmetries of the bundle. Two sections $s$ and $s' = g \cdot s$ related by a
smooth map $g: M \to G$ (a \emph{gauge transformation}, or \emph{$G$-transformation})
represent the same geometric object: the physically meaningful quantity is not $s$
itself but its \emph{gauge equivalence class} (or \emph{$G$-equivalence class})
$[s]$. The structure group therefore controls which transformations of the hidden
representation are invisible to the classifier. A \textbf{connection} $\nabla$ on
$E$ is a rule for differentiating sections: it specifies, at each point of $E$,
a decomposition of the tangent space into a \emph{vertical} part (along the fiber)
and a \emph{horizontal} part (transverse to it). A section is called
\emph{horizontal} (or \emph{parallel}) if its derivative lies entirely in the
horizontal part, meaning it does not move in the fiber direction as $x$ varies
across $M$. The \textbf{curvature} $F$ of the connection measures the failure of
the horizontal distribution to be integrable: zero curvature means parallel
transport around any loop returns the fiber to itself, while non-zero curvature
means transport around a loop can rotate the fiber non-trivially. Finally, the
\textbf{readout map} $\varphi: E \to \R$ is a map that is linear on each fiber
separately: for every $x \in M$, the restriction $\varphi|_{E_x}: E_x \cong
\R^m \to \R$ is a linear functional. Equivalently, $\varphi$ is a global section
of the dual bundle $E^*$. Note that $\varphi$ is \emph{not} required to be linear
as a map on the total space $E$ itself, which is not a vector space. Concretely,
$\varphi$ is the output weight vector $W_2 \in \R^{1\times m}$, acting fiberwise
as $\varphi(s(x)) = W_2 s(x)$. The classifier is $\sigma = \varphi \circ s:
M \to \R$, and the decision boundary is $\Gam = \sigma^{-1}(0)$.

\textbf{Notation convention.} Throughout this paper, $\varphi$ denotes the
readout map $E \to \R$, while $\phi$ (a different glyph) denotes the $O(2)$
fibre angle function $\phi: M \to \R$ that parametrises the section as
$s = r(\cos\phi, \sin\phi)^\top$. The two symbols are visually close but play
entirely different roles; this convention is maintained consistently.

Three principles organise the framework:
\begin{enumerate}[label=(\arabic*), itemsep=2pt]
  \item Two independent geometric choices replace the two arbitrary choices of
        classical ML: the \textbf{structure group} $G$ replaces the activation
        function; the \textbf{Riemannian metric} $g$ replaces the kernel.
        Both are developed precisely in \S\ref{sec:nn}.
  \item A \textbf{section} of the bundle together with a fiberwise linear readout
        $\varphi$ and Dirichlet conditions at the data points constitutes a
        classification problem.
  \item The \textbf{two-stage energy-minimising solution} satisfying those Dirichlet
        conditions is the unique geometrically preferred solution --- found by
        solving a classical PDE, not by gradient descent.
\end{enumerate}

\paragraph{Scope, stated up front.}
In most of this paper the connection is \emph{derived} from the section rather
than being an independent field: horizontality determines it, so choosing a
solution amounts to choosing a smooth angle function. This makes some results
easy --- Theorem~\ref{thm:flat} is a partition-of-unity argument --- and it means
the reader should not look for gauge-theoretic content in
\S\ref{sec:xor}--\S\ref{sec:curvature}. The framework earns its keep in the
topology of non-contractible bases (\S\ref{sec:sphere}), in the energy selection
that produces the Green's function and the capacitance system
(\S\ref{sec:yang-mills}), and in the dictionary itself
(\S\ref{sec:dictionary}). Making the connection an independent variable with its
own energy is the subject of the companion paper \cite{Vasii2026Paper2}.
Remark~\ref{rem:connection-derived} states this precisely at the point where it
matters most.

The paper is organised as follows. \S2 surveys related work. \S3 recalls
background on connections. \S4 develops the SVM case. \S5 develops the neural
network case and the refined definitions. \S6 works out XOR with $G = O(2)$.
\S7 develops the flower-shaped problem. \S8 computes curvature and proves every
finite classification problem admits a flat solution (Theorem~\ref{thm:flat}).
\S9 develops text classification on spheres. \S10 discusses the structure group
hierarchy and proves the geometric universality theorem
(Theorem~\ref{thm:universal}). \S11 connects the structure group to data
symmetry. \S12 proposes persistent homology as a guide to the structure group.
\S13 develops classification as a geometric BVP with exact solutions. \S14
presents a numerical experiment. \S15 crystallises the framework as a precise
dictionary. \S16 states the remaining open conjecture and open problems.

The paper has three arcs. \textbf{Arc 1} (\S3--\S9): examples and the bundle
framework --- SVM, XOR, flower, curvature, spheres. \textbf{Arc 2}
(\S10--\S12): which structure group to use --- hierarchy, universality,
symmetry efficiency, and TDA. \textbf{Arc 3} (\S13--\S14): the exact geometric
solution --- $L$-harmonic interpolation, the BVP, and the numerical experiment.
Readers who prefer a deductive presentation may read \S5 (definitions) then
\S13 (the general BVP) before the examples.

% ================================================================
\section{Related Work}\label{sec:related}
% ================================================================

The literature relating geometry and machine learning divides cleanly into two
programmes, both distinct from the present work.

\paragraph{Geometry used \emph{for} machine learning.}
The geometric deep learning programme \cite{bronstein2021} places neural network
architectures on a group-theoretic foundation: convolutions, graph networks, and
transformers are unified as instances of blueprint invariant or equivariant maps
under a symmetry group. Gauge-equivariant convolutional networks
\cite{cohen2019gauge} extend this to principal bundles, associated bundles, and
gauge transformations, giving architectures whose filters are literally sections of
associated bundles over the input manifold. This work shares vocabulary with the
present paper --- bundles, connections, gauge transformations --- but uses them
differently: it constrains the architecture to be equivariant, while we identify
the classification \emph{problem} as a BVP on a bundle. Miolane et al.'s Geomstats
library \cite{miolane2020geomstats} and the broader programme of geometric
statistics \cite{miolane2025euclid} develop Riemannian and Lie-group geometry as
computational tools for ML, with applications to shape analysis, medical imaging,
and structured data. Information geometry \cite{amari2000} equips the space of
probability distributions with a Riemannian structure (the Fisher metric) and
studies gradient descent in those terms. Topological data analysis
\cite{edelsbrunner2010} extracts homological invariants from data, applied here in
\S\ref{sec:tda} to guide structure group selection. All of these use geometric
tools to improve or understand ML. None identifies ML problems as classical
geometric BVPs.

\paragraph{Neural networks described by kernel theory.}
The neural tangent kernel (NTK) \cite{jacot2018ntk} shows that an infinitely wide
neural network trained by gradient descent is equivalent to kernel gradient descent
with a specific kernel derived from the network architecture. In the infinite-width
limit, training dynamics become linear and the solution is a kernel regression
estimator. This line of work establishes that \emph{wide networks behave like kernel
machines} --- a convergence result. The present paper works in the opposite
direction: we identify the kernel machine itself as the exact solution to a
geometric BVP, and show that the network is an iterative approximation to that
exact problem.

\paragraph{The Matérn--Green's function correspondence.}
The identification of Matérn kernels as Green's functions of elliptic operators is
known in spatial statistics. Lindgren, Rue, and Lindström \cite{lindgren2011spde}
establish an explicit link between Gaussian fields with Matérn covariance and
solutions of SPDEs of the form $(\Delta + \kappa^2)^{\nu/2} x = \mathcal{W}$
(white noise), used computationally to obtain sparse precision matrices via finite
elements. Whittle \cite{whittle1963} established the spectral-domain version of
this correspondence earlier. In the ML kernel literature, this connection is noted
in the Gaussian process regression context \cite{scholkopf2002}. Our Theorem~\ref{thm:svm-ym}
identifies the same correspondence as the geometric content of RKHS interpolation:
the kernel is the Green's function, the interpolation coefficients are capacitances,
and the solution is a harmonic potential. The contribution is the geometric
\emph{interpretation} --- the dictionary between the kernel machine and the
classical BVP --- rather than the analytic correspondence itself.

\paragraph{What is new.}
The present paper does not use geometry as a tool for ML, nor does it show that one
ML object approximates another. It establishes an \emph{isomorphism}: the binary
classification problem, as formulated by the SVM and RKHS interpolation, \emph{is}
a classical Dirichlet BVP on a vector bundle; the kernel \emph{is} the Green's
function; the training \emph{is} potential theory. The arbitrariness of current
ML --- the choice of activation, kernel, learning rate --- resolves into two
independent geometric choices (structure group $G$, Riemannian metric $g$) with
precise mathematical meaning. To our knowledge, no prior work frames binary
classification as an element of the solution space of a geometric BVP in this
way.

% ================================================================
\section{Background: Connections on Vector Bundles}
% ================================================================

\subsection{Line bundles and the abelian case}

Let $M$ be a smooth manifold \cite{kobayashi1963}. A \emph{line bundle} over $M$ is a rank-1 vector
bundle $\pi\colon E \to M$. A connection is specified by a 1-form $A \in \Om^1(M)$,
giving covariant derivative $\nabla_A f = df + A \cdot f$. A section
$f\colon M \to \R$ is \emph{horizontal} if $\nabla_A f = 0$, integrating to
$f(x) = C\exp(-\int_\gamma A)$ on simply connected domains. The curvature is
$F = dA \in \Om^2(M)$.

\subsection{Vector bundles and the non-abelian case}

A rank-$m$ vector bundle $E \to M$ with structure group $G \subseteq \GL(m)$
carries a connection
\[
  \Sig = \Sig_\mu\,dx^\mu \in \Om^1(M,\,\mathfrak{g}),
\]
where $\mathfrak{g} = \mathrm{Lie}(G)$. The covariant derivative of a section
$s\colon M \to \R^m$ is $\nabla_\mu s = \del_\mu s + \Sig_\mu(x)\cdot s$, and
the curvature is
\[
  F = d\Sig + \Sig\wedge\Sig \in \Om^2(M,\,\mathfrak{g}).
\]
The term $\Sig\wedge\Sig$ involves the Lie bracket of $\mathfrak{g}$ and vanishes
when $\mathfrak{g}$ is abelian, but is generically non-zero otherwise.

% ================================================================
\section{Support Vector Machines as a Line Bundle Problem}\label{sec:svm}
% ================================================================

\subsection{The bundle and the connection}

A \emph{support vector machine} (SVM) \cite{scholkopf2002} is a binary classifier that finds a
decision boundary of maximum margin between two classes. Given labeled training
data $\{(x_i, y_i)\}$ with $x_i \in \R^n$ and $y_i \in \{+1,-1\}$, the SVM
with kernel $K: \R^n \times \R^n \to \R$ produces a decision function:
\[
  f(x) = \sum_i \alpha_i\,y_i\,K(x_i, x) + b,
\]
where $K$ is a positive definite \emph{kernel function}, $\alpha_i \geq 0$ are
weights supported on the \emph{support vectors}, and $b \in \R$ is a bias term.
The kernel implicitly maps the data into a (possibly infinite-dimensional)
reproducing kernel Hilbert space \cite{scholkopf2002}.

The line bundle framework applies to any real-valued classifier $f: M \to \R$.
Let $M = \R^n$ and $E = M \times \R$ with structure group $\R^*$. The
classification problem has tuple
\[
  \bigl(M = \R^n,\; g = g_{\mathrm{Eucl}},\; E = M\times\R,\;
        G = \R^*,\; \varphi = \mathrm{id},\; \mathcal{D}\bigr),
\]
where $g_{\mathrm{Eucl}} = \sum_{i=1}^n(dx^i)^2$ is the canonical flat metric on
$\R^n$. The Laplace--Beltrami operator reduces to the standard Laplacian
$\Delta_g = \sum_i\partial_i^2$. Its Green's function --- the Newton/Riesz kernel
$\Gr(x,y) \propto |x-y|^{2-n}$ (for $n \geq 3$) or $-\frac{1}{2\pi}\log|x-y|$
(for $n=2$) --- is singular on the diagonal and not positive definite, hence not
directly usable as an RKHS kernel. The Matérn regularisation of
\S\ref{subsec:kernel} resolves this by replacing $\Delta_g$ with
$(\Delta_g + \kappa^2)^\nu$, giving a positive-definite kernel. The decision
function $f$ defines a section $\sigma(x) = (x, f(x))$. Horizontality
$\nabla_A f = 0$ forces:
\begin{equation}\label{eq:svm-conn}
  A \;=\; -\frac{df}{f} \;=\; -d\log|f|,
\end{equation}
defined on $M\setminus\Gam$ where $\Gam = f^{-1}(0)$ is the decision boundary.

Two distinct estimators arise within this framework, and they must not be
conflated:

\begin{description}[itemsep=4pt]
\item[RKHS interpolation.] Minimise $\|f\|^2_{\mathcal{H}_K}$ subject to
  \emph{equality} constraints $f(x_i) = y_i \cdot r$ at every training point.
  The solution is the $L$-harmonic interpolant $f(x) = \sum_i \alpha_i y_i K(x_i,x)$
  with $\boldsymbol{\alpha} = \mathbf{K}^{-1}\mathbf{v}$ — a single linear solve,
  dense $\boldsymbol{\alpha}$ (Theorem~\ref{thm:svm-ym}).
\item[Hard-margin SVM.] Minimise $\|f\|^2_{\mathcal{H}_K}$ subject to
  \emph{inequality} constraints $y_i f(x_i) \geq 1$ (the margin).
  The solution has the same form but with sparse $\boldsymbol{\alpha}$: only
  support vectors (points where $y_i f(x_i) = 1$) contribute.
  The SVM is the RKHS interpolant on the endogenously determined support vector
  set; the quadratic programme is needed precisely because that set is unknown
  a priori.
\end{description}

The geometric framework developed in \S\ref{sec:yang-mills} characterises the
RKHS interpolant as the minimum-$H^\nu$-norm $L$-harmonic function satisfying the
prescribed Dirichlet data — classical potential theory. The SVM adds the margin
constraint on top of this geometric picture.

\subsection{Flatness and the singular locus}

The connection \eqref{eq:svm-conn} is flat on $M\setminus\Gam$: since
$A = -d\log|f|$ is exact, $F = dA = 0$. On simply connected $M$, all the geometry
is concentrated in the logarithmic singularity of $A$ along $\Gam$.

% ================================================================
\section{Neural Networks: Refined Definitions}\label{sec:nn}
% ================================================================

\subsection{The gauge symmetry}

We consider a \emph{one-hidden-layer neural network} for binary classification on
$\R^n$ \cite{goodfellow2016}, in the simplified setting of \textbf{no activation function and no bias
term}. The omission of activations is deliberate: we will show that the structure
group of the bundle plays precisely the geometric role that activations play in the
standard setting, making activations redundant once the bundle geometry is correctly
specified. The omission of biases is a simplification that does not affect the
geometric structure.

The network computes:
\[
  f(x) \;=\; W_2\,W_1\,x, \qquad
  W_1 \in \mathrm{Mat}(m, n),\quad W_2 \in \mathrm{Mat}(1, m),
\]
where $x \in \R^n$ is the input, $W_1$ maps to a hidden layer of dimension $m$,
and $W_2$ collapses to a scalar output. In the standard network, activations would
be applied to $W_1 x$ before multiplying by $W_2$; here we omit them. The
resulting map is linear in $x$, and we will see that this forces the decision
boundary to be a hyperplane --- precisely the geometric content of Remark~\ref{rem:linear}.

The map $x \mapsto W_2 W_1 x$ is invariant under $G\cdot(W_1,W_2)=(GW_1,W_2G^{-1})$
for any $G\in\GL(m)$. This defines a principal $\GL(m)$-bundle over the space of
effective classifiers, with associated rank-$m$ vector bundle $E = M\times\R^m$
over data space $M = \R^n$. The classification problem has tuple
\[
  \bigl(M = \R^n,\; g = g_{\mathrm{Eucl}},\; E = M\times\R^m,\;
        G = O(m),\; \varphi = (1,0,\ldots,0),\; \mathcal{D}\bigr),
\]
where $g_{\mathrm{Eucl}}$ is the canonical flat metric on $\R^n$.

\subsection{Sections, readouts, and the refined definitions}

\begin{definition}\label{def:cp}
A \emph{classification problem} is a tuple $(M, g, E, G, \varphi, \mathcal{D})$
where:
\begin{itemize}[itemsep=2pt]
  \item $(M, g)$ is the data manifold with a Riemannian metric $g$, which
        determines the Laplace--Beltrami operator $\Delta_g$ and hence the kernel
        $K = \Gr_{\Delta_g}$ (the geometry of the data space);
  \item $E \to M$ is a vector bundle with structure group $G$, which determines
        what connections exist and what decision boundaries are achievable
        (the geometry of the fiber);
  \item $\varphi: E\to\R$ is a fiberwise linear readout (a section of $E^*$);
        concretely $W_2 \in \R^{1\times m}$;
  \item $\mathcal{D} = \{(x_i, y_i)\}$ is a labeled dataset, $y_i\in\{+1,-1\}$.
\end{itemize}
The metric $g$ and the structure group $G$ are \emph{independent} geometric
choices playing distinct roles: $g$ determines the geometry of the base $M$ and corresponds to
the choice of kernel; $G$ acts on the fiber $E_x$ and corresponds to the
expressive capacity of the classifier.
\end{definition}

\begin{definition}\label{def:solution}
Fix a classification problem $(M,g,E,G,\varphi,\mathcal{D})$ with $E$ a
\emph{trivial} bundle, and an operator $L = (\Delta_g+\kappa^2)^\nu$,
$\nu > n/2$. Call a pair $(\Sig, s)$ \emph{admissible} if $s$ is nowhere zero
and parallel for $\Sig$ on $M \setminus \Gam$, and satisfies the Dirichlet
conditions $\varphi(s(x_i)) = y_i \cdot r$. A \emph{solution} is a gauge
equivalence class $[\Sig, s]$ selected from the admissible pairs in two stages:
\begin{enumerate}[label=(\roman*), itemsep=2pt]
  \item minimise the Yang--Mills energy
        $\mathcal{E}_{\mathrm{YM}}[\Sig] = \int_M\norm{F_\Sig}^2\,\mathrm{dvol}_g$;
  \item among the minimisers of (i), minimise the potential energy
        $\mathcal{E}_L[u] = \langle u, Lu\rangle = \norm{u}^2_{H^\nu}$,
        where $u$ is the scalar potential determined by $s$ through $\varphi$
        (for $G=\R^+$, $u = f$; for $G=O(2)$ with $s = r(\cos\phi,\sin\phi)^\top$,
        $u = \phi$).
\end{enumerate}
The \emph{decision boundary} is $\Gam = \{x \in M : \varphi(s(x)) = 0\}$.
\end{definition}

\begin{remark}[Why trivial bundles, and what the two stages do]\label{rem:two-stage}
Two points of care.

\emph{First}, the case split of earlier drafts --- use $\int\norm{F}^2$ when
$F \not\equiv 0$ and the potential energy otherwise --- is inadmissible, since
$F$ is a property of the candidate, not of the problem: such a test makes the
objective depend on the object it is meant to choose. The lexicographic order
above is well defined for every admissible pair. On a trivial bundle stage~(i)
is achieved with value $0$ by the whole flat family
(Theorem~\ref{thm:flat} exhibits one member), and stage~(ii) does the selecting.

\emph{Second}, the restriction to trivial bundles is not cosmetic. A parallel
section satisfies $F_\Sig\, s = 0$; for an $SO(2)$-connection on a rank-2 bundle
the curvature is $F = f\!\cdot\!J$ with $J$ invertible, so $F_\Sig\,s = 0$ forces
$s = 0$ wherever $F \neq 0$. Hence a nowhere-zero parallel section exists only
if the connection is flat, and on a bundle with $e(E) \neq 0$ --- where no flat
connection exists (\S\ref{sec:sphere}) --- the admissible set is empty and
Definition~\ref{def:solution} says nothing. The curved case requires relaxing
``parallel'' to \emph{covariantly harmonic}, $D_A^*D_A\phi = 0$, with $A$ an
independent field; that is the Yang--Mills--Higgs setting of the companion paper
\cite{Vasii2026Paper2} and is not developed here.

\emph{Third}, for $G = \R^+$ the classifier $f$ vanishes on $\Gam$, so
$A = -d\log|f|$ is singular there and $s = f$ is parallel only on
$M\setminus\Gam$ --- as the definition states.
\end{remark}

\begin{remark}[Horizontal vs covariantly harmonic]
Throughout this paper, solutions are described as \emph{horizontal} (parallel)
sections: $\nabla_\Sig s = 0$. This is the flat-connection limit. In the
Yang--Mills--Higgs framework of the companion paper \cite{Vasii2026Paper2},
the section instead satisfies $D_A^* D_A \phi = 0$ away from the data
--- it is \emph{covariantly harmonic}, not horizontal. On flat connections
($F = 0$) and on contractible $M$, the two conditions coincide: $D_A = \nabla$
and covariant harmonicity reduces to ordinary harmonicity, which on contractible
domains is equivalent to the flat horizontal condition. Horizontality
is thus the degenerate limit of covariant harmonicity, appropriate for this
paper's regime.
\end{remark}

\subsection{Two independent geometric choices}

A classification problem $(M, g, E, G, \varphi, \mathcal{D})$ involves two
independent geometric choices that play entirely different roles and should not
be conflated.

\textbf{The structure group $G$} acts on the \emph{fiber} $E_x \cong \R^m$. It
determines: (i) the gauge symmetries; (ii) the Lie algebra $\mathfrak{g}$ in which
the curvature lives; (iii) the \textbf{orbit geometry} --- which $G$-invariant
subsets of the fiber the section is constrained to inhabit.

For $G = O(2)$: since $O(2)$ acts by isometries on $\R^2$, the orbit of any
non-zero vector is the sphere $S^1(r) = \{\|s\| = r\}$. The section is constrained
to this circle, and the angle $\phi$ parametrises its position on $S^1(r)$. Sign
changes in $\sigma = s^1 = r\cos\phi$ occur when $\phi$ passes through $\pi/2$ ---
at that point $s = (0, r) \neq 0$. The section \emph{rotates through the
perpendicular direction} rather than vanishing.

For $G = \R^*$ (line bundle): the section is a scalar function; sign changes require
it to pass through zero. There is no $G$-invariant sphere to rotate on.

This is the precise content of the dictionary entry activation $\leftrightarrow G$:
an activation function (tanh, sigmoid) constrains the hidden representation to an
interval or curve $(-1,1)$ in the output --- a ``sphere'' in 1D. The structure group
$O(m)$ constrains the section to $S^{m-1}(r)$ --- a sphere in $m$ dimensions. Both
are sphere-type constraints on the hidden representation; the structure group
determines the dimension and geometry of that sphere. The specific path traced on
the sphere as $x$ varies is the connection (the angle function $\phi(x)$); the
sphere itself is the orbit of $G$. Crucially, $G$ does not correspond to a kernel
or to a metric --- as a \emph{structure} group it has no counterpart in the data
space $M$.

\begin{remark}[Structure group vs.\ symmetry group]\label{rem:two-roles}
A caveat for \S\ref{sec:symmetry}. A structure group acts on fibres; a symmetry
group of the data acts on the base. These are different roles, and the
symmetry-efficiency conjecture (Conjecture~\ref{conj:symmetry}) needs both at
once: it compares $G$-equivariant sections, which presupposes an action of $G$
on $M$ as well as on the fibre. The conjecture is therefore not a statement
about structure groups alone but about pairs $(G \curvearrowright M,\;
\rho: G \to O(m))$ --- a symmetry group of the base together with a
representation on the fibre, as in equivariant network theory. Where
$G_{\mathrm{data}}$ appears below it is in this doubled sense.
\end{remark}

\textbf{The Riemannian metric $g$} determines the geometry of the \emph{base} $M$.
It determines
the Laplace--Beltrami operator $\Delta_g$, whose Green's function $\Gr_g(x,y)$
is the kernel of the classifier: $K(x,y) = \Gr_g(x,y)$. Different choices of $g$
give different kernels. In the traditional SVM, the kernel is chosen empirically;
here, choosing $g$ is choosing the geometric structure on the data space. The
metric has no counterpart in the fiber.

The dictionary entry is therefore:
\[
  \text{activation function} \;\longleftrightarrow\; G \quad(\text{fiber geometry}),
  \qquad
  \text{kernel } K \;\longleftrightarrow\; g \quad(\text{base geometry}).
\]
Both are geometric choices, but they are independent and govern different parts
of the bundle.

\begin{remark}[Linearity without activation]\label{rem:linear}
When $G = \GL(m)$ and no activation is present, the composed map $x \mapsto
W_2 W_1 x$ is a linear functional on $\R^n$, so the decision boundary is always
a hyperplane. The section $s(x) = W_1 x$ is linear in $x$, and the connection
$\Sig_\mu = -W_1 e_\mu (W_1 x)^\dagger$ has $F = 0$ wherever $s \neq 0$. The
non-abelian term $\Sig\wedge\Sig$ contributes nothing beyond the abelian case:
enlarging the structure group to $\GL(m)$ without introducing non-trivial fiber
transformations leaves the classifier linear.
\end{remark}

% ================================================================
\section{The XOR Problem with $G = O(2)$}\label{sec:xor}
% ================================================================

\subsection{Setup and failure of the abelian case}

The XOR dataset consists of four points in $M = \R^2$:
\[
  (0,0) \mapsto -1,\quad (1,1) \mapsto -1,\quad
  (0,1) \mapsto +1,\quad (1,0) \mapsto +1.
\]
This section studies \textbf{Problem A} (finite labeled data): only the four
point values are prescribed; the global angle function $\phi$ is not unique.
The particular ansatz \eqref{eq:phi-xor} below is one solution; any other smooth
$\phi$ satisfying the same four Dirichlet conditions is equally valid, and may
have different geometric properties (different decision boundary, different
oscillation count). The classification problem has tuple
\[
  \bigl(M = \R^2,\; g = g_{\mathrm{Eucl}},\; E = M\times\R^2,\;
        G = O(2),\; \varphi = (1,0),\; \mathcal{D}_{\mathrm{XOR}}\bigr).
\]
No linear classifier separates them. With $G = \R^*$, any continuous $f$ with the
correct signs requires $\Gam = f^{-1}(0)$ to separate the plane into two regions
containing diagonally opposite pairs. This is topologically impossible with a
single connected curve in $\R^2$ without enclosing a point of the wrong class:
a topological obstruction, not a failure of optimisation.

\begin{figure}[ht]
\centering
\begin{tikzpicture}[scale=2.4]
  \fill[gray!18]
    (-0.5, -0.5+0.707) -- (1.5, 1.5+0.707) --
    (1.5,  1.5-0.707) -- (-0.5, -0.5-0.707) -- cycle;
  \draw[thick, black, dashed]
    (-0.5, -0.5+0.707) -- (1.5, 1.5+0.707);
  \draw[thick, black, dashed]
    (-0.5, -0.5-0.707) -- (1.5, 1.5-0.707);
  \draw[-{Stealth}] (-0.35,0) -- (1.35,0) node[right] {$x^1$};
  \draw[-{Stealth}] (0,-0.35) -- (0,1.35) node[above] {$x^2$};
  \draw (1,0.04)--(1,-0.04) node[below, font=\small] {$1$};
  \draw (0.04,1)--(-0.04,1) node[left,  font=\small] {$1$};
  \node[font=\small\itshape, gray!60!black] at (0.5,  0.5)  {class $-1$};
  \node[font=\small\itshape]               at (-0.18, 0.75) {$+1$};
  \node[font=\small\itshape]               at (1.18,  0.25) {$+1$};
  \node[font=\footnotesize, rotate=45] at (1.05, 1.42)
    {$|x^1\!-\!x^2|=\tfrac{1}{\sqrt{2}}$};
  \draw[thick] (0,0) circle (0.055) node[below left, font=\small] {$(0,0)$};
  \draw[thick] (1,1) circle (0.055) node[above right, font=\small] {$(1,1)$};
  \filldraw[fill=black!80, draw=black]
    (0,1) +(-0.05,-0.05) rectangle +(0.05,0.05)
    node[above right, font=\small, black] {$(0,1)$};
  \filldraw[fill=black!80, draw=black]
    (1,0) +(-0.05,-0.05) rectangle +(0.05,0.05)
    node[below right, font=\small, black] {$(1,0)$};
  \draw[-{Stealth}, blue!65, thick, line width=0.8pt]
    (0,0) -- +(-0.20, 0) node[above, font=\tiny, blue!65] {$\phi=\pi$};
  \draw[-{Stealth}, blue!65, thick, line width=0.8pt]
    (1,1) -- +(-0.20, 0) node[above, font=\tiny, blue!65] {$\phi=\pi$};
  \draw[-{Stealth}, blue!65, thick, line width=0.8pt]
    (0,1) -- +(0.20, 0) node[above, font=\tiny, blue!65] {$\phi=0$};
  \draw[-{Stealth}, blue!65, thick, line width=0.8pt]
    (1,0) -- +(0.20, 0) node[below, font=\tiny, blue!65] {$\phi=0$};
  \draw[rounded corners=2pt, gray!50]
    (-0.34, -0.32) rectangle (1.34, -0.15);
  \node[font=\footnotesize, anchor=west] at (-0.30, -0.235)
    {$\circ$ class $-1$ \quad $\blacksquare$ class $+1$ \quad
     {\color{blue!65}$\rightarrow$} fiber angle $\phi(x)$
     \quad \textit{gray}: class $-1$ region};
\end{tikzpicture}
\caption{XOR with $G=O(2)$. The decision boundary is $|x^1-x^2|=1/\sqrt{2}$.
The fiber angle $\phi$ equals $\pi$ at negative points and $0$ at positive points.
The section $s(x) = r(\cos\phi(x),\sin\phi(x))^\top$ has constant norm and never
vanishes; the sign change of $\sigma = s^1 = r\cos\phi$ is a fiber rotation.}
\label{fig:xor}
\end{figure}

\subsection{The $O(2)$ connection}

We take $G = O(2)$, $E = \R^2\times\R^2$, $\Sig = \theta\cdot J$ with
$J = \bigl(\begin{smallmatrix}0&{-1}\\1&0\end{smallmatrix}\bigr)$.
Writing $s(x) = r\bigl(\cos\phi(x),\sin\phi(x)\bigr)^\top$, horizontality gives
$\theta = -d\phi$. With $\varphi = (1,0)$ and the ansatz
\begin{equation}\label{eq:phi-xor}
  \phi(x^1,x^2) = \pi\bigl(1-(x^1-x^2)^2\bigr),
\end{equation}
all four XOR points are correctly classified (verified by direct substitution). The
connection is $\Sig = 2\pi(x^1-x^2)(dx^1-dx^2)\cdot J$ with curvature $F = 0$,
since $(x^1-x^2)(dx^1-dx^2) = d[\tfrac{1}{2}(x^1-x^2)^2]$ is exact. The decision
boundary is $|x^1-x^2| = 1/\sqrt{2}$ (Figure~\ref{fig:xor}).

\begin{remark}[Local validity of the ansatz]
The angle function \eqref{eq:phi-xor} is unbounded: $\phi \to -\infty$ as
$|x^1-x^2| \to \infty$, so $\cos\phi$ oscillates indefinitely and the global
classifier produces infinitely many parallel sign-change stripes. This is a local
construction, valid near the four data points. To obtain a well-behaved global
classifier with exactly two regions, restrict to the strip
$|x^1-x^2| \leq 1/\sqrt{2} + \varepsilon$ for small $\varepsilon > 0$, or
replace the polynomial ansatz with a bounded angle function (such as the
$\tanh$-based construction of \S\ref{sec:flower}) that saturates to $0$ and $\pi$
far from the data.
\end{remark}

% ================================================================
\section{The Flower Problem with $G = O(2)$}\label{sec:flower}
% ================================================================

\subsection{Setup}

We consider the planar flower classification problem, a standard benchmark in
neural network pedagogy. This section studies \textbf{Problem B} (continuous boundary
representation): the decision boundary $\Gam$ is given analytically, and we
construct a connection-section pair that represents it exactly. This is distinct
from Problem A (finite labeled data), where only point evaluations are prescribed
and the boundary is determined by the interpolating solution. The data space is
$M = \R^2$, equipped with polar coordinates $(r, \alpha)$. The classification
problem has tuple
\[
  \bigl(M = \R^2,\; g = g_{\mathrm{Eucl}},\; E = M\times\R^2,\;
        G = O(2),\; \varphi = (1,0),\; \mathcal{D}\bigr).
\]
The decision boundary is a closed curve
with $k$-fold rotational symmetry:
\begin{equation}\label{eq:flower-boundary}
  \Gam:\quad r \;=\; r_0\bigl(1 + \varepsilon\cos(k\alpha)\bigr),
\end{equation}
where $r_0 > 0$ is the mean radius, $\varepsilon \in (0,1)$ controls the petal
amplitude, and $k$ is the number of petals. Points inside $\Gam$ belong to class
$+1$; points outside to class $-1$. We work with $k=4$, $r_0=1$, $\varepsilon=0.4$
throughout.

\begin{figure}[ht]
\centering
\begin{tikzpicture}[scale=1.75]
  % ---- Flower interior (class +1, light gray) ----
  \fill[gray!20]
    plot[domain=0:360, samples=300, smooth cycle, variable=\a]
    ({(1 + 0.4*cos(4*\a))*cos(\a)}, {(1 + 0.4*cos(4*\a))*sin(\a)});

  % ---- Flower boundary ----
  \draw[thick, black, dashed]
    plot[domain=0:360, samples=300, smooth cycle, variable=\a]
    ({(1 + 0.4*cos(4*\a))*cos(\a)}, {(1 + 0.4*cos(4*\a))*sin(\a)});

  % ---- Axes ----
  \draw[-{Stealth}] (-1.65,0) -- (1.65,0) node[right] {$x^1$};
  \draw[-{Stealth}] (0,-1.65) -- (0,1.65) node[above] {$x^2$};

  % ---- Class +1 points (filled squares, inside petals) ----
  % Near petal tips: alpha = 0, 90, 180, 270
  \foreach \px/\py in {1.15/0.0, 0.0/1.15, -1.15/0.0, 0.0/-1.15,
                        0.55/0.25, -0.3/0.5, -0.5/-0.25, 0.3/-0.5}
    \filldraw[fill=black!80, draw=black]
      (\px,\py) +(-0.045,-0.045) rectangle +(0.045,0.045);

  % ---- Class -1 points (open circles, outside / inter-petal) ----
  % Between petals (alpha = 45,135,225,315) at larger r
  \foreach \px/\py in {1.1/0.9, -0.9/1.1, -1.1/-0.9, 0.9/-1.1,
                        1.45/0.3, -0.3/1.45, -1.45/-0.3, 0.3/-1.45}
    \draw[thick] (\px,\py) circle (0.055);

  % ---- Fiber angle arrows ----
  % Inside petal (alpha=0, r=1.1): phi=0, arrow right
  \draw[-{Stealth}, blue!65, thick, line width=0.8pt]
    (0.85, 0) -- +(0.22, 0)
    node[above, font=\tiny, blue!65] {$\phi{=}0$};

  % Outside (alpha=45 deg, r=1.2): phi=pi, arrow left
  \draw[-{Stealth}, blue!65, thick, line width=0.8pt]
    (0.9, 0.82) -- +(-0.22, 0)
    node[above, font=\tiny, blue!65] {$\phi{=}\pi$};

  % On boundary near alpha=20 deg: phi=pi/2, arrow up
  \draw[-{Stealth}, blue!65, thick, line width=0.8pt]
    (1.06, 0.38) -- +(0, 0.22)
    node[right, font=\tiny, blue!65] {$\phi{=}\tfrac{\pi}{2}$};

  % ---- Region labels ----
  \node[font=\small\itshape, gray!55!black] at (0, 0.35)     {$+1$};
  \node[font=\small\itshape]               at (1.35, 1.35)   {$-1$};

  % ---- Boundary label ----
  \node[font=\footnotesize, rotate=45] at (1.15, 1.42)
    {$r = r_0(1+\varepsilon\cos k\alpha)$};

  % ---- Legend ----
  \draw[rounded corners=2pt, gray!40]
    (-1.62, -1.58) rectangle (1.62, -1.42);
  \node[font=\footnotesize, anchor=west] at (-1.58, -1.50)
    {$\circ$ class $-1$\quad $\blacksquare$ class $+1$\quad
     {\color{blue!65}$\rightarrow$} fiber angle $\phi$\quad
     \textit{gray}: class $+1$ region};
\end{tikzpicture}
\caption{Flower classification with $k=4$ petals, $r_0=1$, $\varepsilon=0.4$,
and $G=O(2)$. The shaded interior of the closed curve $\Gam$ is class $+1$;
the exterior is class $-1$. The fiber angle $\phi \approx 0$ inside petals,
$\phi \approx \pi$ outside, and $\phi = \pi/2$ on $\Gam$. The connection
1-form $\theta = -d\phi$ oscillates $k$ times around $\Gam$, with the
oscillation frequency encoding the number of petals.}
\label{fig:flower}
\end{figure}

\subsection{The angle function and the connection}

We construct the solution following the pattern of Theorem~\ref{thm:flat}. Define the signed
distance from the boundary:
\[
  P(r,\alpha) \;=\; r - r_0\bigl(1 + \varepsilon\cos(k\alpha)\bigr),
\]
so $P < 0$ inside $\Gam$ and $P > 0$ outside. The angle function is:
\begin{equation}\label{eq:phi-flower}
  \phi(r,\alpha) \;=\; \frac{\pi}{2}\bigl(1 + \tanh(\lambda P(r,\alpha))\bigr),
  \qquad \lambda > 0,
\end{equation}
taking values in $(0,\pi)$ with $\phi \approx 0$ deep inside and $\phi \approx \pi$
deep outside. The section is $s = r(\cos\phi, \sin\phi)^\top$ with constant norm
$r > 0$, and the classifier is $\sigma = r\cos\phi$.

Computing $dP$ in polar coordinates:
\[
  dP \;=\; dr \;-\; r_0\varepsilon k\sin(k\alpha)\,d\alpha.
\]
Setting $\beta(\rho) = \frac{\pi\lambda}{2}\,\mathrm{sech}^2(\lambda\rho)$,
the connection 1-form is:
\begin{equation}\label{eq:conn-flower}
  \theta \;=\; -d\phi \;=\; -\beta(P)\,dP
  \;=\; -\beta(P)\,dr \;+\; r_0\varepsilon k\,\beta(P)\sin(k\alpha)\,d\alpha.
\end{equation}

\subsection{Where $k$ is encoded}

The two components of $\theta$ carry different information:

\textbf{Radial component} $\theta_r = -\beta(P)$: controls how rapidly the
classifier transitions from $+1$ to $-1$ as we cross $\Gam$ radially. It is
concentrated near $\Gam$ (where $P \approx 0$) and decays exponentially away
from it, with width $1/\lambda$.

\textbf{Angular component} $\theta_\alpha = r_0\varepsilon k\,\beta(P)\sin(k\alpha)$:
encodes the shape of the boundary. Along the boundary $P = 0$, this simplifies to:
\[
  \theta_\alpha\big|_{\Gam} \;=\; \frac{\pi\lambda r_0\varepsilon k}{2}\sin(k\alpha).
\]
This oscillates exactly $k$ times as $\alpha$ traverses $[0,2\pi]$. The number
of petals appears as the \textbf{oscillation frequency of the angular component
of the connection 1-form along $\Gam$} --- but only for this particular choice
of angle function $\phi$.

\begin{remark}[$k$ as a property of the solution, not the problem]
For Problem B (continuous $\Gam$ given), the integer $k$ is determined by the
boundary curve and is a genuine geometric feature of $\Gam$. For Problem A
(finite labeled data), $k$ is \emph{not} an invariant of the classification
problem: a different angle function $\phi'$ satisfying the same Dirichlet
conditions $\phi'(x_i) \in \{0,\pi\}$ at the same data points may have a
different oscillation count. The flower example works with Problem B, where
$\Gam$ is given analytically and $k$ is prescribed by the boundary's symmetry.
\end{remark}

\subsection{Flatness}

Since $\phi$ is a single-valued smooth function on $\R^2$:
\[
  F \;=\; d\Sig \;=\; -d^2\phi\cdot J \;=\; 0.
\]
The connection is flat, as guaranteed by Theorem~\ref{thm:flat}.

Integrating $\theta$ around any loop $\gamma$ in $\R^2$:
\[
  \oint_\gamma \theta \;=\; -\oint_\gamma d\phi \;=\; 0,
\]
since $\phi$ is single-valued. The holonomy is trivial for every loop. In
particular, the holonomy around $\Gam$ itself is zero --- the section returns
to its starting angle after going around the boundary once.

\subsection{Comparison with XOR and the role of topology}

\begin{center}
\renewcommand{\arraystretch}{1.25}
\begin{tabular}{@{}llll@{}}
\toprule
& \textbf{XOR} & \textbf{Flower} \\
\midrule
$\Gam$ topology    & Two lines (non-compact)   & Single closed curve $\cong S^1$ \\
Symmetry           & $\mathbb{Z}_2\times\mathbb{Z}_2$ & $\mathbb{Z}_k$ \\
Angle function     & Polynomial in $x^1-x^2$   & $\tanh$ of signed distance \\
Encoding of $k$    & Not present               & Frequency of $\theta_\alpha$ along $\Gam$ \\
Holonomy           & Trivial                   & Trivial \\
Curvature          & $F=0$                     & $F=0$ \\
\bottomrule
\end{tabular}
\end{center}

\medskip
Both examples are flat, consistent with Theorem~\ref{thm:flat}. The difference is purely
in the local geometry of the connection: the XOR connection has no preferred
angular direction, while the flower connection oscillates with frequency $k$ along
$\Gam$. Over $\R^2$, the integer $k$ is a \emph{metric} invariant of the solution,
not a topological one.

\begin{remark}[Why $O(2)$ appears in textbook examples]\label{rem:textbook}
The appearance of $G = O(2)$ in both the XOR and flower examples is not a
coincidence of the framework --- it reflects the way such examples are
\emph{constructed}. Textbook classification benchmarks are typically generated
algorithmically by the authors: XOR is built from reflections across the diagonals
of the unit square; the flower is built by rotating a radial template $k$ times.
Both constructions use rotations and reflections, which are exactly the elements
of $O(2)$. The symmetry group of the data generation process is a subgroup of
$O(2)$, and so the structure group $O(2)$ fits the problem perfectly.

In real classification problems --- sentiment analysis, fraud detection, medical
diagnosis --- the data is not generated by rotations. There is no reason for the
relevant structure group to be $O(2)$ or any other rotation group. The symmetry
group of the data distribution may be discrete, non-abelian, high-dimensional, or
difficult to identify. Choosing the right $G$ for a real problem requires
understanding the symmetries of the data --- a point we return to in \S\ref{sec:symmetry}.
\end{remark}

\begin{remark}[Bridge to the sphere]\label{rem:bridge}
The transition from $\R^2$ to a non-contractible base space $M$ changes the
status of $k$ fundamentally. On $S^2$, oriented rank-2 real bundles with structure
group $SO(2)$ are classified by $H^2(S^2, \mathbb{Z}) = \mathbb{Z}$,
with the integer being the Euler class $e(E) = k$. Any section of the degree-$k$
bundle must satisfy $\int_{S^2} F = 2\pi k$, making $k$ a quantised topological
charge rather than a local geometric feature. The flower example over $\R^2$ can
therefore be seen as the flat, degenerate limit of a richer family of classifiers
over $S^2$, where the petal count becomes a topological invariant of the bundle.
\end{remark}

% ================================================================
\section{Curvature of the Induced Connection}\label{sec:curvature}
% ================================================================

We now investigate precisely when the connection induced by a section is flat.
This leads to an explicit curvature formula, a counterexample to the naive
flatness conjecture, and a positive theorem showing that every classification
problem admits a flat solution.

\subsection{The curvature formula}

Let $s: M \to \R^m$ be smooth with $s(x) \neq 0$ everywhere. Horizontality
$\nabla_\Sig s = ds + \Sig\, s = 0$ constrains $\Sig$ only through
$\Sig\, s = -ds$: it determines $\Sig$ on the line spanned by $s$ and leaves the
action on $s^\perp$ free. A canonical choice is the rank-one (projector)
connection
\[
  \Sig_\mu \;=\; -(\del_\mu s)\,s^\dagger
  \;=\; -\frac{(\del_\mu s)\,s^T}{\norm{s}^2},
\]
which we analyse in this section.

\begin{remark}[Structure group of the projector connection]\label{rem:glm}
The choice above is a $\GL(m)$-connection, not an $O(m)$-connection: even for
constant-norm $s = r(\cos\phi,\sin\phi)^\top$ one computes
$\Sig + \Sig^T \neq 0$, so $\Sig \notin \so(m)$. The $O(2)$-connection used in
\S\ref{sec:xor}--\S\ref{sec:flower}, namely $\Sig = -d\phi \cdot J$ with
$J$ the standard complex structure, is a \emph{different} horizontal choice for
the same section. Both are flat in the constant-norm planar case, so the worked
examples are unaffected; but the general curvature analysis of this section
concerns the projector connection and is $\mathfrak{gl}(m)$-valued. Where a
$G$-connection is required, $\Sig$ must be specified as $\mathfrak{g}$-valued
from the outset.
\end{remark}

Write $a_\mu = \del_\mu s$, $a_{\mu\nu} = \del_\mu\del_\nu s$, and
$\alpha_\mu = s^\dagger a_\mu = \frac{s^T a_\mu}{\norm{s}^2} \in \R$.

Computing $\del_\mu\Sig_\nu$:
\[
  \del_\mu\Sig_\nu
  = -\frac{a_{\mu\nu}s^T + a_\nu a_\mu^T}{\norm{s}^2}
    + \frac{2\alpha_\mu\,a_\nu s^T}{\norm{s}^2}.
\]
The antisymmetric part (with $a_{\mu\nu}$ terms cancelling by symmetry of mixed
partials):
\[
  \del_\mu\Sig_\nu - \del_\nu\Sig_\mu
  = \frac{1}{\norm{s}^2}
    \bigl(a_\mu a_\nu^T - a_\nu a_\mu^T
          + 2\alpha_\mu a_\nu s^T - 2\alpha_\nu a_\mu s^T\bigr).
\]
The commutator term:
\[
  [\Sig_\mu,\Sig_\nu]
  = \frac{1}{\norm{s}^2}
    \bigl(\alpha_\nu a_\mu s^T - \alpha_\mu a_\nu s^T\bigr).
\]
Adding these and defining the projection onto $s^\perp$:
\[
  a_\mu^\perp \;=\; a_\mu - \alpha_\mu s
  \;=\; \bigl(I - s\,s^\dagger\bigr)a_\mu,
\]
the curvature reduces to:

\begin{equation}\label{eq:curvature}
  \boxed{F_{\mu\nu}
  \;=\; \frac{1}{\norm{s}^2}
        \Bigl(a_\mu\,(a_\nu^\perp)^T - a_\nu\,(a_\mu^\perp)^T\Bigr).}
\end{equation}

\subsection{When is the curvature zero?}

\begin{proposition}\label{prop:flat-criterion}
Write $a_\mu = a_\mu^\perp + \alpha_\mu s$ with $a_\mu^\perp \in s^\perp$. The
induced connection \eqref{eq:curvature} is flat if and only if, at every
$x \in M$ and for all $\mu,\nu$, \emph{both}
\[
  \text{(a)}\quad a_\mu^\perp \wedge a_\nu^\perp = 0,
  \qquad\qquad
  \text{(b)}\quad \alpha_\mu\, a_\nu^\perp = \alpha_\nu\, a_\mu^\perp .
\]
No condition on the topology of $M$ is required.
\end{proposition}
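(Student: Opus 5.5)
The plan is to work pointwise, directly from the boxed curvature formula \eqref{eq:curvature}, and to split $F_{\mu\nu}$ according to the orthogonal decomposition $\R^m = \mathrm{span}(s) \oplus s^\perp$, which is valid at every $x$ because $s(x) \neq 0$. Since \eqref{eq:curvature} is an algebraic identity at each point, flatness ($F_{\mu\nu}(x) = 0$ for all $x,\mu,\nu$) is a pointwise condition. No integration or global argument is needed, which accounts for the final sentence of the proposition.

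\textbf{Step 1: substitute the decomposition.} First I substitute $a_\mu = a_\mu^\perp + \alpha_\mu s$ into the \emph{left} factors of \eqref{eq:curvature}, leaving the right factors as $a_\nu^\perp$, $a_\mu^\perp$. This gives
\[
  \norm{s}^2 F_{\mu\nu}
  = \bigl(a_\mu^\perp (a_\nu^\perp)^T - a_\nu^\perp (a_\mu^\perp)^T\bigr)
    + s\,\bigl(\alpha_\mu a_\nu^\perp - \alpha_\nu a_\mu^\perp\bigr)^T .
\]
Every column of the first bracket lies in $s^\perp$, while every column of the second term is a multiple of $s$.

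\textbf{Step 2: project onto the two summands.} Applying the projector $I - s s^\dagger$ on the left kills the second term and returns the first bracket. Applying $s^T$ on the left kills the first bracket, because $s^T a^\perp = 0$, and returns $\norm{s}^2\bigl(\alpha_\mu a_\nu^\perp - \alpha_\nu a_\mu^\perp\bigr)^T$. A matrix vanishes if and only if both of these left-projections vanish, since $I = (I - s s^\dagger) + s s^\dagger$. Therefore $F_{\mu\nu} = 0$ holds if and only if two things vanish:
\begin{itemize}
  \item the antisymmetric matrix $a_\mu^\perp (a_\nu^\perp)^T - a_\nu^\perp (a_\mu^\perp)^T$;
  \item the vector $\alpha_\mu a_\nu^\perp - \alpha_\nu a_\mu^\perp$.
\end{itemize}
The second condition is exactly (b).

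\textbf{Step 3: match the first condition with (a).} It remains to identify the antisymmetric matrix $uv^T - vu^T$ with the bivector $u \wedge v \in \Lambda^2\R^m$, under the standard isomorphism $\Lambda^2\R^m \cong \so(m)$, and to note that this isomorphism is injective. Concretely, $uv^T - vu^T = 0$ holds if and only if $u$ and $v$ are linearly dependent. I check this by applying the matrix to a vector orthogonal to one of them, or by looking at its $2\times 2$ minors. Hence the first condition is exactly (a), and the equivalence holds at each $x$ and for each pair $\mu,\nu$.

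I expect no real obstacle; the only point needing care is Step 1. One must expand $a_\mu$ only in the left factor, so that the cross terms separate cleanly by image, with the $s^\perp$-part and the $s$-part landing in different summands. One should also observe that the $\alpha_\mu\alpha_\nu$ terms never appear, because the right factors of \eqref{eq:curvature} are already perpendicular parts.
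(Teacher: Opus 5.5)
Your proof is correct and follows the paper's argument. You make the same substitution into the left factors, split into an $s^\perp\otimes s^\perp$ piece and an $s\otimes s^\perp$ piece, and identify the two vanishing conditions with (a) and (b); your explicit left-projections by $I - s s^\dagger$ and $s^T$ simply spell out the paper's remark that the two summands lie in independent subspaces of $\R^m\otimes\R^m$.
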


\begin{proof}
Substituting $a_\mu = a_\mu^\perp + \alpha_\mu s$ into \eqref{eq:curvature},
\[
  \norm{s}^2 F_{\mu\nu}
  \;=\; \underbrace{\Bigl(a_\mu^\perp (a_\nu^\perp)^T - a_\nu^\perp (a_\mu^\perp)^T\Bigr)}_{\in\; s^\perp \otimes s^\perp}
  \;+\; \underbrace{s\,\bigl(\alpha_\mu a_\nu^\perp - \alpha_\nu a_\mu^\perp\bigr)^T}_{\in\; s \otimes s^\perp}.
\]
Since $\R^m = \R s \oplus s^\perp$, the two summands lie in independent subspaces
of $\R^m\otimes\R^m$ and must vanish separately. The first vanishes exactly when
the $a_\mu^\perp$ are pairwise parallel, which is (a); the second exactly when
(b) holds. The criterion is pointwise and involves only the first derivative of
$s$, so the topology of $M$ is irrelevant.
\end{proof}

\begin{corollary}\label{cor:flat-constant-norm}
If $\norm{s}$ is constant then $\alpha_\mu \equiv 0$, condition (b) is vacuous,
and the connection is flat if and only if the $a_\mu$ span at most a line at each
point. In particular, if $\norm{s}$ is constant and the image of $s$ lies in a
$2$-dimensional linear subspace of $\R^m$, the connection is flat.
\end{corollary}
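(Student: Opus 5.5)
The plan is to specialise Proposition~\ref{prop:flat-criterion} to the constant-norm case, treating the two claims of Corollary~\ref{cor:flat-constant-norm} in turn.

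\textbf{Constant norm kills the mixed term.} First I will show that $\alpha_\mu \equiv 0$. If $\norm{s}^2 = s^T s$ is constant, differentiating gives $0 = \del_\mu(s^T s) = 2\,s^T a_\mu$. Hence $\alpha_\mu = s^T a_\mu/\norm{s}^2 = 0$ for every $\mu$. Two consequences follow:
\begin{itemize}
  \item $a_\mu = a_\mu^\perp$, since $a_\mu = a_\mu^\perp + \alpha_\mu s$;
  \item both sides of condition (b) vanish identically, so (b) is vacuous.
\end{itemize}

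\textbf{Condition (a) gives the rank criterion.} By Proposition~\ref{prop:flat-criterion}, flatness is now equivalent to condition (a) alone, namely $a_\mu \wedge a_\nu = 0$ for all $\mu,\nu$ at every point. I will show this holds if and only if $\dim\operatorname{span}\{a_\mu\} \le 1$.
\begin{itemize}
  \item If some $a_{\mu_0} \neq 0$, then each $a_\nu$ is parallel to it, so all the $a_\mu$ lie on the line $\R a_{\mu_0}$.
  \item If all $a_\mu = 0$, the span is $\{0\}$.
  \item Conversely, vectors lying on a common line have pairwise vanishing wedge products.
\end{itemize}
This proves the first assertion.

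\textbf{The two-dimensional case.} Suppose $s(M) \subseteq V$ with $\dim V = 2$. Differentiating, each $a_\mu = \del_\mu s$ also lies in $V$. Combined with $a_\mu \perp s$ from the first step, each $a_\mu$ lies in $V \cap s^\perp$. Since $s \neq 0$ and $s \in V$, the space $V \cap s^\perp$ is one-dimensional. So the $a_\mu$ span at most a line, and the first part gives flatness.

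The only point needing care is this last step, which relies on $s$ being nowhere zero. That is a standing assumption of the section, and it also holds automatically here because $\norm{s}$ is a nonzero constant. No step is a real obstacle; the argument is pointwise linear algebra.
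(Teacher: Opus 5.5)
Your proposal is correct and follows the paper's proof essentially line for line. You differentiate $s^T s$ to get $\alpha_\mu = 0$, which reduces flatness to condition (a) and hence to the span condition. For the two-plane case you then place the $a_\mu$ in $V \cap s^\perp$. The only difference is that you spell out the wedge-to-span equivalence and the nonvanishing of $s$, which the paper leaves implicit.
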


\begin{proof}
Constant norm gives $s^T a_\mu = 0$, hence $\alpha_\mu = 0$ and
$a_\mu^\perp = a_\mu$; (b) holds trivially and (a) becomes the stated span
condition. If moreover the image lies in a $2$-plane $V$, then $s(x) \in V$ and
$a_\mu(x) \in V$, so $\{a_\mu(x)\} \subset V \cap s(x)^\perp$, of dimension at
most $1$.
\end{proof}

\begin{remark}[Both conditions are needed]
Condition (a) alone is not sufficient. For $s(x^1,x^2) = (x^1,x^2,0)$ --- image
in the plane $\{x^3=0\}$, non-constant norm --- one computes, with
$\rho^2 = (x^1)^2+(x^2)^2$ and $v = (x^2,-x^1,0)$,
\[
  a_1^\perp = \tfrac{x^2}{\rho^2}\,v, \qquad
  a_2^\perp = -\tfrac{x^1}{\rho^2}\,v, \qquad
  (\alpha_1,\alpha_2) = \tfrac{1}{\rho^2}(x^1,x^2).
\]
The $a_\mu^\perp$ are parallel, so (a) holds; but
$\alpha_1 a_2^\perp - \alpha_2 a_1^\perp = -\rho^{-2} v \neq 0$, so (b) fails,
and indeed
\[
  F_{12} \;=\; \frac{1}{\rho^4}
  \begin{pmatrix} -x^1x^2 & (x^1)^2 & 0 \\
                  -(x^2)^2 & x^1x^2 & 0 \\
                  0 & 0 & 0 \end{pmatrix} \;\neq\; 0 .
\]
Conversely, when $\dim M = 1$ there are no $2$-forms and every section is flat
regardless of its image. Constant norm is what makes (b) automatic; all sections
used in this paper ($s = r(\cos\phi,\sin\phi)^\top$) have constant norm.
\end{remark}

\subsection{Counterexample: a section mapping to $S^2$}

Take $M = \R^2$ and $s: M \to S^2 \subset \R^3$ defined by
\[
  s(x^1,x^2) = \bigl(\sin x^1\cos x^2,\;\sin x^1\sin x^2,\;\cos x^1\bigr).
\]
This has constant norm $\norm{s} = 1$. The derivatives $a_1 = \del_1 s$ and
$a_2 = \del_2 s$ both lie in $s^\perp$ and are linearly independent (they are the
coordinate tangent vectors on $S^2$). Therefore $\{a_\mu^\perp\}$ spans a
2-dimensional subspace of $s^\perp$, and $F_{12} \neq 0$.

Explicitly, $F_{12} = a_1 a_2^T - a_2 a_1^T$, which is proportional to the area
form of $S^2$. This is the curvature of the \emph{tautological bundle} over $S^2$
--- a classical non-trivial $O(3)$-bundle. The original flatness conjecture is
therefore \textbf{false} for $m \geq 3$.

\subsection{Every classification problem admits a flat solution}

The counterexample shows that not every section induces a flat connection. However,
for any classification problem there always \emph{exists} a section inducing a
flat connection.

\begin{theorem}[Existence of flat solutions]\label{thm:flat}
Let $M$ be a smooth manifold and $\mathcal{D} = \{(x_i, y_i)\}$ a finite binary
classification dataset. Then there exists a section $s: M \to \R^2
\hookrightarrow \R^m$ of constant norm and a flat $O(2)$-connection $\nabla$ on
$E = M \times \R^m$ such that $[\nabla, s]$ solves the classification problem
with readout $\varphi = (1,0,\ldots,0)$.
\end{theorem}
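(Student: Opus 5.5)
The plan is to reduce everything to the choice of a single smooth angle function $\phi\colon M\to\R$, exactly as in the XOR and flower examples. I will take $s = r(\cos\phi,\sin\phi,0,\ldots,0)^\top$ for a fixed $r>0$, so that $s$ has constant norm $r$ and image in the plane $\R^2\hookrightarrow\R^m$. For the connection I will use $\Sig = -d\phi\cdot J$, where $J$ is the standard complex structure on the first two coordinates, extended by $0$ on the remaining $m-2$ coordinates. This $\Sig$ takes values in $\so(2)\subset\so(m)$, so it is an $O(2)$-connection. The readout then gives $\varphi(s(x)) = r\cos\phi(x)$. The Dirichlet conditions $\varphi(s(x_i)) = y_i r$ therefore reduce to $\phi(x_i)=0$ when $y_i=+1$ and $\phi(x_i)=\pi$ when $y_i=-1$.

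\textbf{Step 1 (interpolating angle).} Since $\mathcal{D}$ is finite, I may assume the $x_i$ are distinct; otherwise the data are inconsistent or redundant. Because $M$ is Hausdorff, I can choose pairwise disjoint open coordinate neighbourhoods $U_i\ni x_i$. On each $U_i$ I take a smooth bump function $\chi_i$ with compact support in $U_i$ and $\chi_i\equiv 1$ near $x_i$, which exists since $M$ is a smooth manifold. I then set
\[
\phi \;=\; \sum_i \pi\,\tfrac{1-y_i}{2}\,\chi_i .
\]
This is a finite sum of smooth functions, so $\phi$ is smooth on all of $M$. Because the supports are disjoint, $\phi(x_i) = \pi(1-y_i)/2$, which is $0$ or $\pi$ as required. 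This is the partition-of-unity argument mentioned in the Scope paragraph.

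\textbf{Step 2 (horizontality and flatness).} Differentiating gives $ds = r\,d\phi\,(-\sin\phi,\cos\phi,0,\ldots)^\top = d\phi\, J s$. Hence $\nabla s = ds + \Sig s = d\phi\,Js - d\phi\,Js = 0$, so $s$ is parallel on all of $M$, and $s$ is nowhere zero since $\|s\|=r$. For the curvature, $F = d\Sig + \Sig\wedge\Sig = -d^2\phi\,J - (d\phi\wedge d\phi)J^2 = 0$, because $d^2=0$ and $d\phi\wedge d\phi=0$ for the scalar $1$-form $d\phi$. This argument uses no topology of $M$, since $\phi$ is a global single-valued function. As a cross-check, Corollary~\ref{cor:flat-constant-norm} gives the same conclusion for the projector connection.

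\textbf{Step 3 (what ``solves'' means).} The pair $(\Sig,s)$ is admissible in the sense of Definition~\ref{def:solution}, and it attains Yang--Mills energy $0$, which is the global minimum of stage (i). The main subtlety I expect lies here rather than in the construction. The theorem should be read as asserting that an admissible flat pair exists, or equivalently that stage (i) is attained with value $0$, as Remark~\ref{rem:two-stage} indicates. It should not be read as identifying the stage-(ii) minimiser, whose construction belongs to \S\ref{sec:yang-mills}. I will state this reading explicitly.
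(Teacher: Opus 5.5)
Your proposal is correct and is essentially the paper's proof. The paper also takes $s=r(\cos\theta,\sin\theta,0,\ldots,0)^\top$ with $\Sig=-d\theta\cdot J$ and gets $F=0$ from $d^2\theta=0$ and $d\theta\wedge d\theta=0$; the only difference is that it builds the angle as $\theta=\frac{\pi}{2}(1-f)$ from a smooth interpolant $f$ with $f(x_i)=y_i$, whereas you build it directly from disjoint bump functions. (The term $(d\phi\wedge d\phi)J^2$ should carry a $+$ sign, which is immaterial since it vanishes; your explicit horizontality check and your reading of ``solves'' as admissibility with zero Yang--Mills energy agree with Remarks~\ref{rem:two-stage} and~\ref{rem:xor-vortex}.)
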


\begin{proof}
By a standard partition of unity argument, there exists a smooth function
$f: M \to [-1,1]$ with $f(x_i) = y_i$ for all $i$. Define
\[
  \theta(x) = \frac{\pi}{2}(1 - f(x)) \;\in\; [0,\pi],
\]
so that $f(x_i) = +1 \Rightarrow \theta(x_i) = 0$ and $f(x_i) = -1 \Rightarrow
\theta(x_i) = \pi$. Set
\[
  s(x) = r\bigl(\cos\theta(x),\;\sin\theta(x),\;0,\ldots,0\bigr)^T \in \R^m
\]
for any $r > 0$. The section takes values in a fixed 2-dimensional subspace, so
by Proposition~\ref{prop:flat-criterion} the induced connection is flat. The
horizontality condition gives $\Sig = -d\theta \cdot J$ where $J$ acts on the
first two components. The curvature is
\[
  F = d\Sig + \Sig\wedge\Sig = -d^2\theta\cdot J + (d\theta\wedge d\theta)\cdot J^2 = 0,
\]
since $d^2 = 0$ and $d\theta\wedge d\theta = 0$ for any 1-form. The classifier is
$\varphi(s(x)) = r\cos\theta(x)$, which has the correct sign at every data point
since $\cos 0 = +1$ and $\cos\pi = -1$.
\end{proof}

\begin{corollary}\label{cor:O2-sufficient}
Every finite binary classification problem on any smooth manifold $M$ can be
solved with a flat $O(2)$-connection. In particular, for finite data:
\begin{enumerate}[label=(\roman*), itemsep=2pt]
  \item Curvature is never necessary;
  \item The problem always reduces to the $O(2)$ case, regardless of the ambient
        fiber dimension $m$;
  \item The non-trivial geometry (when $m \geq 3$) encodes topological information
        about the solution as a map $s: M \to S^{m-1}$, not additional expressive
        power over $O(2)$.
\end{enumerate}
\end{corollary}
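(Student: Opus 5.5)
The corollary is essentially a repackaging of Theorem~\ref{thm:flat}, so the plan is to derive the headline claim directly from it and then treat (i)--(iii) as three short consequences.

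\textbf{Headline claim.} Given a finite dataset $\mathcal{D}$ on $M$, I invoke Theorem~\ref{thm:flat}. It produces a constant-norm section $s$ valued in the fixed plane $\R^2 \hookrightarrow \R^m$ and the $O(2)$-connection $\Sig = -d\theta\cdot J$ with $F=0$, satisfying the sign conditions at every $x_i$. The one point needing care is that the theorem's interpolant $f$ satisfies $f(x_i)=y_i$ exactly. Hence $\theta(x_i)\in\{0,\pi\}$, and $\varphi(s(x_i)) = r\cos\theta(x_i) = y_i r$, which is the exact Dirichlet condition of Definition~\ref{def:solution}, not merely the correct sign. The section is nowhere zero since $\norm{s}=r>0$. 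So the pair is admissible and has $\mathcal{E}_{\mathrm{YM}}=0$, the minimum possible value.

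\textbf{Items (i) and (ii).} For (i), the constructed admissible pair is flat, so no admissible solution is forced to carry curvature. Equivalently, stage (i) of Definition~\ref{def:solution} is always attained at value $0$, as Remark~\ref{rem:two-stage} anticipates. For (ii), the connection is $J$ acting on the first two components and zero on the complement, and $s$ lies in that $2$-plane. The problem on $E=M\times\R^m$ is therefore carried by the rank-$2$ subbundle $M\times\R^2$ with structure group $O(2)$, embedded block-diagonally in $O(m)$. This holds independently of $m$.

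\textbf{Item (iii).} This is the step I expect to be the real obstacle, because it is interpretive rather than formal. I would make it precise as follows. Any constant-norm section is a map $s: M\to S^{m-1}(r)$, and by Proposition~\ref{prop:flat-criterion} and Corollary~\ref{cor:flat-constant-norm} the projector connection is flat exactly when $ds$ has rank at most $1$ pointwise. Curvature for $m\ge 3$, as in the $S^2$ counterexample, therefore reflects how $s$ spreads over $S^{m-1}$, that is, a property of the map $s$. Meanwhile the set of realisable labelings is already exhausted by the $O(2)$ construction, since every labeling of finite data is realised. The content of (iii) is thus that enlarging $m$ adds no labelings, only additional geometry of the map $s$. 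I would phrase it exactly at that level of precision, rather than claim a topological classification that the preceding results do not support.
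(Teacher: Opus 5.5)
Your proposal is correct and follows the paper's route. The paper gives no separate argument: it treats the corollary as an immediate consequence of Theorem~\ref{thm:flat} (Remark~\ref{rem:connection-derived} calls it automatic from the partition-of-unity construction), and your extra checks---that $\theta(x_i)\in\{0,\pi\}$ gives the exact Dirichlet values $y_i r$ with $s$ nowhere zero, and your rank-$\leq 1$ reading of (iii) via Corollary~\ref{cor:flat-constant-norm}---are sound and state more precisely what the paper leaves interpretive.
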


\begin{remark}
No condition on the topology of $M$ is required. The partition of unity
argument produces a smooth interpolating function on any smooth manifold;
$d^2\theta = 0$ and $d\theta\wedge d\theta = 0$ hold by the elementary
properties of the exterior derivative, independent of the topology of $M$;
and the bundle $E = M\times\R^m$ is trivial by construction. The topological
obstruction described in \S\ref{sec:sphere} --- where flat solutions can fail to
exist --- arises only when one fixes a \emph{non-trivial} bundle over a
non-contractible base, a situation outside the scope of this theorem.
\end{remark}

\begin{remark}[What the connection contributes, and where]
\label{rem:connection-derived}
It is worth stating plainly what Theorem~\ref{thm:flat} and
Corollary~\ref{cor:O2-sufficient} do \emph{not} say. Throughout
\S\ref{sec:xor}--\S\ref{sec:curvature} the connection is not an independent
variable: horizontality $\Sig\,s = -ds$ determines $\Sig$ from $s$ (up to the
freedom on $s^\perp$ noted in Remark~\ref{rem:glm}), so a solution is specified
by the section alone, equivalently by the angle function $\phi$. Any nowhere-zero
$s = r(\cos\phi,\sin\phi)^\top$ with any smooth $\phi$ is a flat $O(2)$ solution,
and the classifier is $r\cos\phi$. Theorem~\ref{thm:flat} therefore carries no
bundle-theoretic content beyond the partition-of-unity fact that a smooth
function with prescribed signs at finitely many points exists, and
Corollary~\ref{cor:O2-sufficient} is automatic for the same reason. The
nonlinearity that classical machine learning attributes to the activation
function has been \emph{relocated} to $\phi$, not derived from the geometry.

The framework acquires genuine content in three places, none of them here.
\begin{enumerate}[label=(\roman*), itemsep=2pt]
  \item \textbf{Topology of the base.} When $M$ is non-contractible, the bundle
        constrains which $\phi$ are admissible: the parity invariant on $S^1$
        and the Euler class on $S^2$ (\S\ref{sec:sphere}) are statements no
        choice of $\phi$ can evade.
  \item \textbf{Energy selection.} Imposing $\mathcal{E}_L$ replaces the
        arbitrary $\phi$ by a distinguished one (\S\ref{sec:yang-mills}); this
        is where the Green's function, the capacitance system, and the
        connection to RKHS interpolation live.
  \item \textbf{The connection as an independent field.} Only when $A$ carries
        its own energy $\int\|F_A\|^2$ and its own topological constraint,
        independently of the section, does the gauge-theoretic structure do
        work. That is the Yang--Mills--Higgs setting of the companion paper
        \cite{Vasii2026Paper2}, not of this one.
\end{enumerate}
In this paper the connection is largely a bookkeeping device: it makes (i) and
(ii) expressible in a uniform language and supplies the dictionary of
\S\ref{sec:dictionary}. A reader should not expect gauge-theoretic content
where the connection is merely derived from the section.
\end{remark}

\begin{remark}[Role of curvature]
If curvature is never necessary for finite data, what does it encode?
The curvature of the induced connection is the pullback of the canonical
curvature of the tautological bundle over $S^{m-1}$: it measures how the
section $s\colon M \to S^{m-1}$ wraps around the sphere. Among all solutions
to a given classification problem, the flat $O(2)$ solution minimises the
curvature functional $\int_M\norm{F}^2\,\mathrm{dvol}$. Solutions with non-zero
curvature are geometrically richer and may carry better regularity or
generalisation properties --- a question connected to the Yang-Mills variational
problem \cite{atiyah1983} (Q3 in \S\ref{sec:open}).
\end{remark}

% ================================================================
\section{Text Classification on Spheres}\label{sec:sphere}
% ================================================================

\subsection{Embeddings and cosine similarity}

In modern natural language processing, documents are represented as vectors in
$\R^n$ called \emph{embeddings}. Comparing documents by \emph{cosine similarity}
--- the inner product of their normalised versions --- is equivalent to comparing
their projections onto the unit sphere $S^{n-1} \subset \R^n$. Binary
classification (positive/negative sentiment, topic A vs topic B) is then a
problem over $M = S^{n-1}$, not over $\R^n$.

This changes the geometry fundamentally. Unlike $\R^n$, which is contractible,
$S^{n-1}$ is non-contractible and supports non-trivial vector bundles.
\S\ref{sec:curvature} established that over any smooth manifold every finite
classification problem admits a flat $O(2)$ solution (Theorem~\ref{thm:flat}).
Over spheres this can fail: certain bundles have non-zero characteristic classes
that force non-zero curvature and impose topological constraints on the decision
boundary independent of any training procedure.

The dimension $n$ determines which obstructions arise. We develop three cases.

\subsection{$M = S^1$: circular semantics and the Möbius classifier}

\paragraph{Setting.}
$S^1$ arises for 2-dimensional normalised embeddings, and more generally models
circular semantic spaces: a political spectrum that wraps around, a periodic
temporal structure, or any domain where the two extremes are identified. With
coordinate $\alpha \in [0, 2\pi)$, the data space is the circle. The
classification problem has tuple
\[
  \bigl(M = S^1,\; g = g_{\mathrm{round}},\; E \to S^1,\;
        G = O(2),\; \varphi = (1,0),\; \mathcal{D}\bigr),
\]
where $g_{\mathrm{round}} = d\alpha^2$ is the round metric on $S^1$ and $E$ is
the (necessarily trivial) rank-2 bundle; the topological content lies in the
line subbundle $L_s \subset E$, as explained below.

$S^1$ is the only sphere that is not simply connected: $\pi_1(S^1) = \mathbb{Z}$.
Flat connections are classified by holonomy representations $\rho: \mathbb{Z} \to G$,
determined by the single element $\rho(1) \in G$ --- the fiber rotation after one
full traversal of the circle. For $G = O(2)$, this is $R(\Delta\phi) \in O(2)$
where $\Delta\phi = \phi(2\pi) - \phi(0)$ is the total rotation of the fiber angle.

\paragraph{The rank-2 bundle is always trivial; the invariant lives elsewhere.}
Rank-2 real bundles over $S^1$ are classified by $\pi_0(O(2)) = \mathbb{Z}_2$,
detected by whether the holonomy lies in $SO(2)$ or in the reflection component.
Since $R(\Delta\phi) \in SO(2)$ for every $\Delta\phi$ --- in particular
$R(\pi) = -I \in SO(2)$ --- \emph{every} bundle arising from a continuous fiber
angle $\phi$ is trivial as a rank-2 bundle. A genuinely non-orientable rank-2
bundle over $S^1$ would require a reflection as holonomy, which no such $\phi$
produces.

The topological content is instead carried by the \textbf{real line subbundle}
$L_s = \mathrm{span}\,\{s(x)\} \subset E$, equivalently the line field
$x \mapsto [s(x)] \in \mathbb{RP}^1$. Its first Stiefel--Whitney class
$w_1(L_s) \in H^1(S^1;\mathbb{Z}_2) \cong \mathbb{Z}_2$ is the degree mod $2$
of that line field, and it is a genuine invariant:
$w_1(L_s) = 0$ iff $\Delta\phi \in 2\pi\mathbb{Z}$, and $w_1(L_s) \neq 0$ iff
$\Delta\phi \in \pi + 2\pi\mathbb{Z}$.

\textbf{Orientable line field} ($\Delta\phi \in 2\pi\mathbb{Z}$, $w_1(L_s) = 0$):
the section $s(\alpha) = r(\cos\phi(\alpha), \sin\phi(\alpha))^\top$ is
single-valued and $\sigma = r\cos\phi$ has an \emph{even} number of sign changes
(including zero).

\textbf{Möbius line field} ($\Delta\phi = \pi$, $w_1(L_s) \neq 0$): the simplest
non-trivial case. The flat connection and explicit section are:
\[
  \theta = -\tfrac{1}{2}\,d\alpha, \qquad
  \phi(\alpha) = \tfrac{\alpha}{2} + \phi_0, \qquad
  \sigma(\alpha) = r\cos\!\bigl(\tfrac{\alpha}{2} + \phi_0\bigr).
\]
With $\phi_0 = 0$: $\sigma$ has exactly \emph{one} zero on $[0,2\pi)$, at
$\alpha = \pi$. Curvature: $F = d\theta \cdot J = 0$ (flat). Here $s$ is
double-valued as a map to $\R^2$ but the line $[s]$ is single-valued: the line
field closes up after one traversal having reversed orientation, which is
precisely $w_1(L_s) \neq 0$.

\begin{remark}[Why the parity is not gauge-trivial]
The connection $\theta = -\tfrac12 d\alpha$ is gauge-equivalent to $0$ on the
universal cover via $R(\alpha/2)$, under which $s$ becomes constant and $\sigma$
acquires no zeros. But that gauge transformation is not single-valued on $S^1$:
$R(\alpha/2)$ returns to $-I$, not $I$, after $\alpha \mapsto \alpha + 2\pi$.
The parity of the zeros of $\sigma$ is therefore not an invariant of the rank-2
bundle $E$ (which is trivial) but of the pair $(L_s, \varphi)$: the line field
together with the fixed readout direction.
\end{remark}

\begin{figure}[ht]
\centering
\begin{tikzpicture}[scale=1.6]
  \def\R{1.2}
  % Positive arc: alpha in [0, pi] (upper semicircle), sigma = cos(alpha/2) > 0
  \draw[line width=5pt, gray!22, line cap=butt]
    (0:\R) arc[start angle=0, end angle=180, radius=\R];
  % Negative arc: alpha in [pi, 2pi] (lower semicircle), sigma < 0
  \draw[line width=5pt, gray!58, line cap=butt]
    (180:\R) arc[start angle=180, end angle=360, radius=\R];
  % Circle outline
  \draw[thick] (0,0) circle (\R);
  % Decision boundary point at alpha=pi (left)
  \filldraw[black] (180:\R) circle (0.072)
    node[left=3pt, font=\small] {$\sigma=0$};
  % Fiber angle arrows
  \draw[-{Stealth}, blue!65, thick, line width=0.85pt]
    (0:\R)   -- ++(0.28, 0)    node[right,       font=\scriptsize, blue!65] {$\phi=0$};
  \draw[-{Stealth}, blue!65, thick, line width=0.85pt]
    (90:\R)  -- ++(0.20, 0.20) node[above right, font=\scriptsize, blue!65] {$\phi=\tfrac{\pi}{4}$};
  \draw[-{Stealth}, blue!65, thick, line width=0.85pt]
    (180:\R) -- ++(0, 0.28)    node[above,       font=\scriptsize, blue!65] {$\phi=\tfrac{\pi}{2}$};
  \draw[-{Stealth}, blue!65, thick, line width=0.85pt]
    (270:\R) -- ++(-0.20, 0.20)node[below left,  font=\scriptsize, blue!65] {$\phi=\tfrac{3\pi}{4}$};
  % Alpha labels
  \node[font=\footnotesize] at (1.62,  0)   {$\alpha=0$};
  \node[font=\footnotesize] at (0,  1.55)   {$\alpha=\tfrac{\pi}{2}$};
  \node[font=\footnotesize] at (0, -1.55)   {$\alpha=\tfrac{3\pi}{2}$};
  % Class labels
  \node[font=\small\itshape] at (0,  0.58)  {$\sigma > 0$};
  \node[font=\small\itshape] at (0, -0.58)  {$\sigma < 0$};
  % Annotation
  \node[font=\footnotesize] at (0, -2.1)
    {M\"{o}bius line field: $\Delta\phi = \pi$, $w_1(L_s) \neq 0$};
\end{tikzpicture}
\caption{Möbius line field on $S^1$ with $\phi(\alpha) = \alpha/2$. The fiber
angle increases by $\pi$ around the circle, so the line $[s]$ closes up with
reversed orientation: $w_1(L_s) \neq 0$. The
classifier $\sigma = r\cos(\alpha/2)$ has exactly one sign change at $\alpha =
\pi$ (marked dot). Light arc: $\sigma > 0$; dark arc: $\sigma < 0$. The
connection $\theta = -\frac{1}{2}d\alpha$ is flat, and the ambient rank-2
bundle is trivial.}
\label{fig:s1}
\end{figure}

\begin{proposition}\label{prop:s1-parity}
On $S^1$ with structure group $O(2)$ and fixed readout $\varphi$, the parity of
the number of zeros of $\sigma = \varphi \circ s$ equals
$w_1(L_s) \in H^1(S^1;\mathbb{Z}_2)$, the first Stiefel--Whitney class of the
real line subbundle $L_s = \mathrm{span}\{s\}$:
\begin{itemize}[itemsep=2pt]
  \item Even number of zeros (including zero) $\;\Leftrightarrow\;$
        $w_1(L_s) = 0$, orientable line field.
  \item Odd number of zeros $\;\Leftrightarrow\;$
        $w_1(L_s) \neq 0$, Möbius line field.
\end{itemize}
Both cases admit flat connections, and in both the ambient rank-2 bundle $E$ is
trivial. The Möbius line field achieves the minimum: one zero of $\sigma$, one
decision boundary point.
\end{proposition}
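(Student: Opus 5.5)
The plan is to reduce everything to a one-dimensional intermediate value argument on a lift of the fiber angle. First I fix what the two sides of the equivalence mean. Parametrise $S^1$ by $\alpha\in[0,2\pi]$ and lift the fiber angle of $s$ to a continuous function $\phi\colon[0,2\pi]\to\R$. The line field $[s]\in\mathbb{RP}^1$ is single-valued on $S^1$, so the lift must satisfy $\phi(2\pi)=\phi(0)+k\pi$ for some $k\in\mathbb{Z}$, with $k=\Delta\phi/\pi$. By the discussion preceding the proposition, $w_1(L_s)$ is the mod-$2$ degree of the line field, so $w_1(L_s)\equiv k \pmod 2$. The proof therefore reduces to showing that the number of zeros of $\sigma=r\cos\phi$ on $[0,2\pi)$ is congruent to $k$ mod $2$.

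\textbf{Counting convention.} I count zeros as sign changes of $\sigma$, i.e.\ transverse crossings. Tangential zeros are either excluded by a genericity assumption or counted with multiplicity; otherwise a touching zero changes the count by one without changing anything topological. I would state this convention explicitly in the proof.

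\textbf{Main step.} I choose the base point so that $\phi(0)\notin\tfrac{\pi}{2}+\pi\mathbb{Z}$, i.e.\ $\sigma(0)\neq 0$. Then
\[
  \sigma(2\pi) \;=\; r\cos\bigl(\phi(0)+k\pi\bigr) \;=\; (-1)^k\,\sigma(0).
\]
Reading $\sigma$ along the interval $[0,2\pi]$, the number of sign changes of a continuous function with nonzero endpoints is even exactly when its endpoint values have the same sign. By the display, that happens exactly when $k$ is even. Hence the number of zeros is even iff $k$ is even iff $w_1(L_s)=0$, and odd iff $w_1(L_s)\neq0$, which gives both bullets.

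\textbf{Remaining claims.} Flatness holds in both cases because $\theta=-d\phi$ is a $1$-form on a $1$-manifold, so $F=d\theta\cdot J=0$ trivially. Triviality of the rank-$2$ bundle $E$ follows from the earlier observation that the holonomy $R(\Delta\phi)$ lies in $SO(2)$. For minimality, the even case allows zero zeros (take $\phi$ constant). In the odd case at least one zero is forced by the parity statement, and $\phi(\alpha)=\alpha/2$ attains exactly one, at $\alpha=\pi$, as already computed in the text.

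\textbf{Expected obstacle.} The main difficulty is not analytic but one of bookkeeping: making precise that ``zeros of $\sigma$'' on $S^1$ is well defined when $s$ itself is only defined up to sign in the Möbius case. Sign changes of $\sigma$ are well defined locally, since $\sigma$ and $-\sigma$ change sign at the same points. Working on the interval $[0,2\pi]$ with the explicit endpoint relation $\sigma(2\pi)=(-1)^k\sigma(0)$ sidesteps this issue, and that is the route I would take.
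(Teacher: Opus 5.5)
Your proof is correct and follows the argument the paper gives in the discussion preceding the proposition. That argument has four ingredients: the lift with $\Delta\phi = k\pi$ and $w_1(L_s)\equiv k \pmod 2$, trivial flatness in dimension one, holonomy in $SO(2)$ for the triviality of $E$, and $\phi=\alpha/2$ as the one-zero example. Your endpoint identity $\sigma(2\pi)=(-1)^k\sigma(0)$ makes the paper's case split uniform, and it proves the odd case for every $\phi$ rather than only for the paper's single example. Your sign-change convention is genuinely needed: $\phi = \tfrac{\pi}{2} - \tfrac{\pi}{8}(1-\cos\alpha)$ has $\Delta\phi = 0$ but gives $\sigma$ exactly one (tangential) zero, which contradicts the statement as literally worded.
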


\begin{remark}[Functor language in the companion paper]
The Möbius line field is exactly the non-trivial functor
$C: \Pi_1(S^1) \to B(\mathbb{Z}_2)$ of the companion paper \cite{Vasii2026Paper2}:
the fundamental groupoid of $S^1$ maps to the classifying space of $\mathbb{Z}_2$,
and the monodromy class $[C] \in H^1(S^1, \mathbb{Z}_2) \cong \mathbb{Z}_2$
coincides with $w_1(L_s)$ above. The line bundle $L_s$ is Paper~2's $L_\xi$.
\end{remark}

\subsection{$M = S^2$: the Dirac monopole and forced curvature}

\paragraph{Setting and structure group.}
$S^2$ arises for 3-dimensional normalised embeddings. A real binary classifier
$f: S^2 \to \R$ lives in a real line bundle with $G = \R^*$; real line bundles
over $S^2$ are classified by $w_1 \in H^1(S^2;\mathbb{Z}/2) = 0$ — they are all
trivial and carry no topological obstruction.

The topologically interesting case uses the rank-2 real bundle setup of the rest
of the paper ($G = O(2)$, fiber $\R^2$, readout $\varphi = (1,0)$). Restricting
to orientation-preserving gauge transformations gives $G = SO(2)$
acting on $\R^2$. Such oriented rank-2 real bundles over $S^2$ are classified by
the \textbf{Euler class} $e(E) \in H^2(S^2,\mathbb{Z}) = \mathbb{Z}$. The
classification problem has tuple
\[
  \bigl(M = S^2,\; g = g_{\mathrm{round}},\; E_k \to S^2,\;
        G = SO(2),\; \varphi = (1,0),\; \mathcal{D}\bigr),
\]
where $E_k$ is the oriented rank-2 real bundle with Euler class $k$.

\paragraph{Curvature is topologically forced.}
For $e = k \neq 0$, any connection $\nabla$ on $E_k$ satisfies:
\[
  \int_{S^2} F_\nabla \;=\; 2\pi k \;\neq\; 0.
\]
If $F = 0$ everywhere, the integral would vanish --- contradicting $k \neq 0$.
Therefore \textbf{no flat connection exists on $E_k$ for $k \neq 0$}.
This is the first case in the framework where Theorem~\ref{thm:flat} does not
apply: the flat $O(2)$ reduction is topologically obstructed.

\paragraph{The Dirac monopole connection.}
The canonical connection on $E_k$ is the \emph{Dirac magnetic monopole}. In
spherical coordinates $(\theta, \psi)$ ($\theta \in [0,\pi]$ polar,
$\psi \in [0,2\pi)$ azimuthal):
\begin{align*}
  A^N &= \frac{k}{2}(1-\cos\theta)\,d\psi \quad
  \text{(northern chart, regular away from south pole)}, \\
  A^S &= -\frac{k}{2}(1+\cos\theta)\,d\psi \quad
  \text{(southern chart, regular away from north pole)}.
\end{align*}
On the equatorial overlap: $A^N - A^S = k\,d\psi$, an $SO(2)$ gauge
transformation with transition function $R(k\psi) \in SO(2)$ (rotation by $k\psi$).
The curvature is:
\[
  F = dA^N = \frac{k}{2}\sin\theta\,d\theta\wedge d\psi
    = \frac{k}{2}\,\omega_{S^2},
\]
proportional to the area form of $S^2$. Integrating confirms:
$\int_{S^2} F = \frac{k}{2}\cdot 4\pi = 2\pi k$.

\paragraph{Forced vanishing of the section and the decision boundary.}
By the Poincaré--Hopf theorem, for a section $s: S^2 \to E_k$ with isolated
zeros,
\[
  \sum_{x_0 \,:\, s(x_0)=0} \mathrm{ind}_{x_0}(s) \;=\; e(E_k) \;=\; k ,
\]
the sum of local indices. For $k \neq 0$ the zero set is therefore non-empty,
though its cardinality can be as small as one (a single zero of index $k$).
Here ``zero of $s$'' means $s(x_0) = \mathbf{0} \in \R^2$ --- the section
\emph{vanishes completely}, not merely $s^1(x_0) = 0$. This is real
codimension 2: isolated points, not curves.

The decision boundary $\Gam = \{x : s^1(x) = 0\} = \{x : \varphi(s(x)) = 0\}$
is still a curve (codimension 1), as required for binary classification.
The relationship is: at a zero $x_0$ of $s$ (where $s(x_0) = \mathbf{0}$),
necessarily $s^1(x_0) = 0$, so $x_0 \in \Gam$. The forced zeros of $s$ are
therefore forced \textbf{points on the decision boundary curve}: the topology
forces the boundary to pass through the zero set, but the boundary itself
remains a curve.

\begin{remark}[The hairy ball theorem as a learnability obstruction]
The tangent bundle $TS^2$ has Euler class $\chi(S^2) = 2$: every smooth vector
field on $S^2$ must vanish somewhere, and the zeros carry \emph{total index} $2$
(the hairy ball theorem). The count is by index, not by cardinality: a single
zero of index $2$ is possible, as for the stereographic pullback of a constant
field. In our framework: if the bundle for a classification problem on $S^2$ has
$e(E_k) = 2$, every section $s$ must vanish with total index $2$, so the decision
boundary must pass through the zero set regardless of training. These are
topological constraints on the classifier's geometry, not limitations of the
optimiser.
\end{remark}

\paragraph{Bridge from the flower.}
In \S6, the petal count $k$ of the flower classifier over $\R^2$ was a local
metric invariant of the connection 1-form --- the oscillation frequency of
$\theta_\alpha$ along $\Gam$. Over $S^2$, the same integer $k = e(E_k) \in \mathbb{Z}$
is a global topological invariant of the bundle, quantised and independent of
any local choice. The promotion from metric to topological is the key effect of
passing from a contractible to a non-contractible base space.

\subsection{Implications for learnability}

The sphere analysis extends the framework in three directions.

\textbf{Curvature can be necessary.} Over $\R^n$ and over $S^1$,
every finite classification problem has a flat solution. Over $S^2$, for Problem A
(finite data), Theorem~\ref{thm:flat} still applies: the trivial bundle always
admits a flat $O(2)$ solution. The topological obstruction bites in two situations:
(i) Problem B, where a continuous boundary $\Gam$ with non-zero winding requires a
non-trivial bundle; or (ii) when a specific non-trivial bundle $E_k$ is
\emph{chosen} as the geometric model — then $e \neq 0$ forces curvature and the
flat reduction is obstructed.

\textbf{Forced points on the decision boundary.} For a chosen degree-$k$ bundle
over $S^2$ with $e = k \neq 0$, any section must vanish completely at $|k|$
points, which forces the decision boundary curve to pass through those points.
This is a topological constraint on the geometry of the classifier given the
choice of bundle, not a prohibition on binary classification itself.

\textbf{Parity constraints on $S^1$.} The number of decision boundary points on
a circular semantic space must have a definite parity determined by the bundle.
Attempting to train a classifier with the wrong parity will always fail.

\medskip
\noindent\textit{Arc 1 ends here.} Sections~\ref{sec:hierarchy}--\ref{sec:tda}
(Arc~2) address the question left open by the examples: which structure group
should be used, and how should it be selected from the data?

% ================================================================
\section{The Structure Group Hierarchy}\label{sec:hierarchy}
% ================================================================

The choice of $G$ is not arbitrary. Topological obstructions (characteristic
classes of $E$) may prevent certain reductions. Beyond obstructions, the choice
has direct geometric meaning:

\begin{center}
\renewcommand{\arraystretch}{1.3}
\begin{tabular}{@{}lll@{}}
\toprule
\textbf{Structure group $G$} & \textbf{Fiber geometry} & \textbf{Constraint on $s$} \\
\midrule
$\R^*$ (abelian)  & Scaling            & $s$ must vanish on $\Gam$ \\
$O(2)$            & Rotation in plane  & $\norm{s}$ constant, values in $\R^2$ \\
$O(m)$            & Riemannian metric  & $\norm{s}$ constant, $s\neq 0$ \\
$U(m)$            & Hermitian metric   & $\norm{s}$ constant, complex structure \\
$\mathrm{SL}(m)$  & Volume form        & $\det$ of fiber frame fixed \\
$\mathrm{Sp}(2m)$ & Symplectic form    & Symplectic structure on fiber \\
$\GL(m)$          & None               & No constraint \\
\bottomrule
\end{tabular}
\end{center}

\medskip
By Theorem~\ref{thm:flat} and Corollary~\ref{cor:O2-sufficient}, $O(2)$ is
sufficient for any finite classification problem. Larger compact groups such as
$O(m)$ or $U(m)$ provide a richer space of solutions --- including curved ones ---
but do not increase the set of solvable problems for finite data.

A geometric analogue of the universal approximation theorem \cite{hornik1989} was proposed in earlier
drafts as a conjecture. We now prove it. This theorem addresses \textbf{Problem B}
(continuous boundary representation): given a continuous $\Gam$, how well can it
be approximated by an $O(2)$ $L$-harmonic interpolant?

The core density argument (Steps~1--2 below) is essentially the universal kernel
theory of Steinwart \cite{steinwart2001} and Micchelli--Xu--Zhang
\cite{micchelli2006}: density of the Green's function span in $C(M)$ for
$\nu > n/2$ is their $c$-universality result. We strengthen the hypothesis
slightly to $\nu > n/2 + 1$ to obtain $C^1$ control (Step~2), which is needed
for the level-set approximation (Step~4). The $O(2)$ section construction
(Steps~5--6) is the geometric packaging. The theorem should be understood as a
geometric restatement of universal kernel theory, not a new density result.

\begin{theorem}[Geometric universality]\label{thm:universal}
Let $(M, g)$ be a compact smooth Riemannian $n$-manifold without boundary, and let
$K = \Gr_g$ be the Green's function of $L = (\Delta_g + \kappa^2)^\nu$ for some
$\nu > n/2 + 1$ and $\kappa > 0$. For any continuous decision boundary
$\Gamma \subset M$ arising as a regular level set of some $\phi_0 \in C^\infty(M)$,
and for any $\varepsilon > 0$, there exist $N \in \mathbb{N}$, points
$x_1,\ldots,x_N \in M$, and coefficients $\alpha_1,\ldots,\alpha_N \in \mathbb{R}$
such that the $O(2)$ $L$-harmonic interpolant
$\phi_N(x) = \sum_{i=1}^N \alpha_i K(x, x_i)$
satisfies:
\begin{enumerate}[label=(\roman*), itemsep=2pt]
  \item $d_H\!\left(\phi_N^{-1}(\pi/2),\,\Gamma\right) < \varepsilon$
        \textnormal{(the zero level set approximates $\Gamma$)};
  \item $\mathrm{sign}(\cos\phi_N(x_i)) = y_i$ for every training point
        \textnormal{(correct sign conditions on all data)}.
\end{enumerate}
\end{theorem}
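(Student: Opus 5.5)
We would prove Theorem~\ref{thm:universal} by reducing it to a $C^1$ density statement for the span of Green's functions, followed by a stability argument for regular level sets. We read the data $\mathcal{D}$ as consistent with $\Gamma$: every $x_i$ lies off $\Gamma$, and $y_i = +1$ exactly on the side where $\phi_0$ exceeds its level value. Without this, condition (ii) cannot hold. The proof has four steps.

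\textbf{Step 1: the target angle function.} Let $\Gamma = \phi_0^{-1}(c)$ with $d\phi_0 \neq 0$ on $\Gamma$. Define
\[
  \psi = \frac{\pi}{2} - \arctan\bigl(\phi_0 - c\bigr),
\]
with the sign chosen so that $\psi < \pi/2$ on the class $+1$ side. Then:
\begin{itemize}[itemsep=2pt]
  \item $\psi \in C^\infty(M)$ and $\psi^{-1}(\pi/2) = \Gamma$;
  \item $\psi$ takes values in a compact subinterval $[\eta_0, \pi - \eta_0] \subset (0,\pi)$, since $M$ is compact;
  \item $d\psi \neq 0$ on $\Gamma$.
\end{itemize}
By compactness there are constants $\rho, m > 0$ such that on the tube $U_\rho = \{x : d_g(x,\Gamma) < \rho\}$ we have $|d\psi| \geq m$. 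There is also $\eta > 0$ with $|\psi - \pi/2| \geq \eta$ on $M \setminus U_{\rho/2}$. Shrinking $\rho$ if necessary, every $x_i$ lies outside $U_\rho$. Hence $\cos\psi(x_i)$ has sign $y_i$ and is bounded away from $0$.

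\textbf{Step 2: density in $H^\nu$ and $C^1$ approximation.} Let $\mathcal{H}$ be $H^\nu(M)$ with inner product $\langle u, Lv\rangle$, so that $K = \Gr_g$ is its reproducing kernel. If $u \in \mathcal{H}$ is orthogonal to every $K(\cdot, x)$, then $u(x) = 0$ for all $x$, so $u = 0$. Hence finite sums $\sum_i \alpha_i K(\cdot, x_i)$ are dense in $\mathcal{H}$; this is the Steinwart / Micchelli--Xu--Zhang universality cited before the theorem. Since $\psi$ is smooth, $\psi \in \mathcal{H}$. Because $\nu > n/2 + 1$, the Sobolev embedding $H^\nu(M) \hookrightarrow C^1(M)$ holds on the compact manifold. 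Therefore, for any $\delta > 0$ there are $N$, centres $x_1,\dots,x_N$ and coefficients $\alpha_1,\dots,\alpha_N$ with
\[
  \|\phi_N - \psi\|_{C^1} < \delta .
\]
The centres here are chosen by the approximation argument; we may always adjoin the training points to them.

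\textbf{Step 3: sign conditions, i.e.\ (ii).} Take $\delta < \eta$ and $\delta < \eta_0$. Then $\phi_N(x_i)$ stays in the same half of $(0,\pi)$ as $\psi(x_i)$, so $\mathrm{sign}(\cos\phi_N(x_i)) = y_i$.

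\textbf{Step 4: level-set approximation, i.e.\ (i).} We expect this to be the main obstacle, because the Hausdorff bound must be proved in both directions.
\begin{itemize}[itemsep=2pt]
  \item \emph{Level set of $\phi_N$ lies near $\Gamma$.} Outside $U_{\rho/2}$ we have $|\phi_N - \pi/2| > \eta - \delta > 0$. Inside the tube, $|\phi_N - \pi/2| \geq |\psi - \pi/2| - \delta$. Along a unit-speed normal geodesic from $\Gamma$, $|\psi - \pi/2|$ grows at rate at least roughly $m$. So any zero of $\phi_N - \pi/2$ lies within distance about $C\delta/m$ of $\Gamma$.
  \item \emph{$\Gamma$ lies near the level set of $\phi_N$.} Take $p \in \Gamma$ and follow the normal geodesic through $p$ to distance $t = 2\delta/m$ on each side. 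There, $\psi - \pi/2$ has magnitude at least about $m t - O(t^2) > \delta$ with opposite signs, so $\phi_N - \pi/2$ also changes sign. By the intermediate value theorem $\phi_N$ takes the value $\pi/2$ within distance $t$ of $p$.
\end{itemize}
The $C^1$ bound, namely $|d\phi_N| \geq m - \delta > 0$ on the tube, is what controls the second-order term uniformly. It also makes $\phi_N^{-1}(\pi/2)$ a regular hypersurface. Choosing $\delta$ so that $C\delta/m < \varepsilon$ then gives (i).
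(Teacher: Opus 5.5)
Your proof is correct and follows the paper's skeleton: density of $\mathrm{span}\{K(\cdot,x)\}$ in $H^\nu$ via the reproducing property, Sobolev embedding, approximation of a defining function for $\Gamma$, then the sign conditions and level-set stability. Within that skeleton it differs from the paper in ways that matter.

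\textbf{The sign step.} The paper approximates $\phi_0$ itself, normalised so that $\phi_0^{-1}(\pi/2)=\Gamma$. In Step~6 it passes from ``$\phi_N(x_i)$ and $\phi_0(x_i)$ lie on the same side of $\pi/2$'' to equality of the signs of $\cos$. That inference needs the values to lie in $(-\pi/2,3\pi/2)$, and it tacitly assumes $y_i=\mathrm{sign}(\cos\phi_0(x_i))$. Your normalisation $\psi=\pi/2-\arctan(\phi_0-c)$, together with $\delta<\eta_0$, keeps $\phi_N$ in $(0,\pi)$ on all of $M$. This justifies the sign step. It also makes the actual decision boundary $\{\cos\phi_N=0\}$ of $r\cos\phi_N$ coincide with $\phi_N^{-1}(\pi/2)$, which the paper's version does not guarantee.

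\textbf{The Hausdorff bound.} For (i), the paper appeals to a quantitative implicit function theorem near $\Gamma$ and does not explicitly exclude components of $\phi_N^{-1}(\pi/2)$ far from $\Gamma$. You handle both directions of the Hausdorff distance explicitly: the bound $|\phi_N-\pi/2|>\eta-\delta$ off the tube, plus the normal-geodesic and intermediate-value argument inside it.

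\textbf{What you omit.} You drop the paper's Step~1 ($L^2$ density, never used afterwards) and the flat-section packaging of Step~5. Neither is needed for (i)--(ii).

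\textbf{Two refinements to your Step~4.}
\begin{enumerate}
\item The lower bound you need is on the normal derivative $d\psi(\dot\gamma)$, not merely on $|d\psi|$. It holds on a thin enough tube by continuity, since $d\psi(\dot\gamma)$ equals $|d\psi|$ on $\Gamma$ when the normal is oriented by $\nabla\psi$.
\item The $O(t^2)$ remainder is controlled by $\|\psi\|_{C^2}$ on the tube, not by the $C^1$ bound on $\phi_N$.
\end{enumerate}

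In fact your argument uses only $\|\phi_N-\psi\|_{C^0}<\delta$, so it shows that (i)--(ii) already hold for $\nu>n/2$. What the paper's $C^1$/implicit-function route buys beyond this is that $\phi_N^{-1}(\pi/2)$ is a regular hypersurface near $\Gamma$, a normal graph over it. The theorem does not claim that.
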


\begin{proof}
\textbf{Step 1} (\emph{$L^2$ density of $\{K(\cdot,x)\}$}). \;
Let $T_K\colon L^2(M,g) \to L^2(M,g)$ be the integral operator with kernel $K$.
Since $L$ is a positive-definite elliptic operator with $K = L^{-1}$ (as an integral
operator), $T_K$ is the inverse of $L$ on $L^2(M,g)$, hence bounded and injective.
Suppose $f \in L^2(M,g)$ satisfies $\langle f, K(\cdot,x)\rangle_{L^2} = 0$ for all
$x \in M$.  Then $(T_K f)(x) = 0$ for all $x$, giving $T_K f = 0$ in $L^2$, so
$f = 0$ by injectivity.  Therefore
\[
  \overline{\mathrm{span}\{K(\cdot,x) : x \in M\}}^{L^2(M,g)} \;=\; L^2(M,g).
\]

\textbf{Step 2} (\emph{RKHS identification and $C^1(M)$ density}). \;
By the Kimeldorf--Wahba correspondence \cite{kimeldorf1971some}, the reproducing
kernel Hilbert space of $K$ is $\mathcal{H}_K = H^\nu(M,g)$ (Sobolev space of
order $\nu$), with squared norm $\|f\|_{\mathcal{H}_K}^2 = \langle f, Lf\rangle_{L^2}$.
The reproducing property $f(x) = \langle f, K(\cdot,x)\rangle_{\mathcal{H}_K}$
implies that any $f \in \mathcal{H}_K$ orthogonal to all $K(\cdot,x)$ must vanish
identically; hence $\mathrm{span}\{K(\cdot,x) : x \in M\}$ is dense in
$\mathcal{H}_K$.  The Sobolev embedding theorem \cite{sobolev1938} gives a
continuous dense inclusion $H^\nu(M,g) \hookrightarrow C^1(M)$ for $\nu > n/2+1$
(one derivative stronger than the $C^0$ embedding, needed for Step~4).
Chaining these two densities:
\[
  \overline{\mathrm{span}\{K(\cdot,x) : x \in M\}}^{C^1(M)} \;=\; C^1(M).
\]

\textbf{Step 3} (\emph{Finite $C^1$ approximation of $\phi_0$}). \;
Let $\phi_0 \in C^\infty(M)$ satisfy $\phi_0^{-1}(\pi/2) = \Gamma$ with
$\nabla\phi_0 \neq 0$ on $\Gamma$.  By Step~2, for any $\delta > 0$ there exist
$N \in \mathbb{N}$, points $x_1,\ldots,x_N \in M$, and $\alpha_1,\ldots,\alpha_N
\in \mathbb{R}$ such that
\[
  \|\phi_N - \phi_0\|_{C^1(M)} \;<\; \delta,
  \qquad \phi_N(x) = \sum_{i=1}^N \alpha_i K(x, x_i).
\]

\textbf{Step 4} (\emph{Hausdorff approximation of $\Gamma$}). \;
Since $\nu > n/2+1$, Step~3 gives $C^1$ control: $\|\phi_N - \phi_0\|_{C^1} < \delta$.
In particular, for all $x$ near $\Gamma$:
\[
  |\nabla\phi_N(x) - \nabla\phi_0(x)| \;<\; \delta.
\]
Since $|\nabla\phi_0| \geq c_0 > 0$ on $\Gamma$ (regularity of the level set), for
$\delta < c_0/2$ we have $|\nabla\phi_N| > c_0/2 > 0$ near $\Gamma$. The implicit
function theorem therefore applies to $\phi_N$ near $\Gamma$, and the standard
quantitative version gives a constant $c > 0$ (depending only on $c_0 =
\inf_\Gamma|\nabla\phi_0|$ and the $C^1$ geometry of $\phi_0$) such that
\[
  d_H\!\bigl(\phi_N^{-1}(\pi/2),\;\phi_0^{-1}(\pi/2)\bigr)
  \;\leq\; c\,\delta.
\]
Choose $\delta = \varepsilon/c$ to obtain $d_H(\phi_N^{-1}(\pi/2), \Gamma) <
\varepsilon$.

\textbf{Step 5} (\emph{Flat $O(2)$ section}). \;
The section $s_N(x) = r(\cos\phi_N(x), \sin\phi_N(x))^\top$ is horizontal for the
connection $\theta = -d\phi_N \in \Omega^1(M, \mathfrak{so}(2))$.  Curvature:
$F = -d^2\phi_N\cdot J = 0$; the connection is flat.  Embedded as
$s_N \hookrightarrow (s_N, 0,\ldots,0)^\top \in \R^m$ for any $m \geq 2$, it is
a parallel section of $E = M \times \R^m$ with structure group $O(2) \subset O(m)$.
With readout $\varphi = (1,0,\ldots,0)$, the classifier is $r\cos\phi_N$.

\textbf{Step 6} (\emph{Sign conditions}). \;
Choose $\delta < \min_i |\phi_0(x_i) - \pi/2|$, which is positive since the
training data lies strictly off $\Gamma$.  Then $|\phi_N(x_i) - \phi_0(x_i)| < \delta$
implies $\phi_N(x_i)$ and $\phi_0(x_i)$ lie on the same side of $\pi/2$, so
$\mathrm{sign}(\cos\phi_N(x_i)) = \mathrm{sign}(\cos\phi_0(x_i)) = y_i$.
\end{proof}

\begin{remark}[Compactness and the worked examples]\label{rem:compactness}
Theorem~\ref{thm:universal} requires $(M,g)$ compact without boundary. This covers
the text-classification setting of \S\ref{sec:sphere} ($M = S^{n-1}$, compact) but
\emph{not} the worked examples of \S\ref{sec:svm}--\S\ref{sec:flower} and the
experiment of \S\ref{sec:experiment}, which all live on $M = \R^2$ (non-compact).
On non-compact $M$, the Matérn kernel is still well-defined and the capacitance
linear solve still produces an interpolant, but the $C^1(M)$ density argument of
Steps~1--2 requires modification: one needs either a compact support assumption,
decay conditions at infinity, or a weighted Sobolev space framework. The finite
interpolation problem (Theorem~\ref{thm:flat}) is unaffected by compactness ---
the partition-of-unity construction works on any smooth manifold. The universality
theorem should be understood as covering the compact setting; its extension to
$\R^n$ is an open problem.
\end{remark}

% ================================================================
\section{Data Symmetry, Structure Groups, and Efficiency}\label{sec:symmetry}
% ================================================================

\subsection{The symmetry group of the data}

Remark~\ref{rem:textbook} observed that $O(2)$ appears in textbook examples
because those examples are built using rotations. This raises a precise question:
for a given classification problem, what is the \emph{right} structure group?

We propose the following principle: the natural structure group for a classification
problem is the \textbf{symmetry group of the data distribution}. More precisely,
let $\mathcal{D}$ be the data distribution on $M$, and let $G_{\mathrm{data}}$ be
the group of transformations of $M$ that leave $\mathcal{D}$ invariant and commute
with the labelling. Then the bundle with structure group $G_{\mathrm{data}}$ is
the geometrically natural choice: the connection can exploit the data symmetry
directly, and the parallel section inherits the same symmetry.

For real classification problems the data symmetry is rarely $O(2)$. It may be:
\begin{itemize}[itemsep=2pt]
  \item A \textbf{discrete group} $\mathbb{Z}_k$ or $S_n$ (permutation symmetry
        in tabular data);
  \item A \textbf{product group} $G_1 \times G_2 \times \cdots$ when independent
        features have independent symmetries;
  \item A \textbf{continuous group} such as $SO(3)$ (3D point clouds, molecular
        data) or $SE(3)$ (rigid body transformations);
  \item \textbf{No symmetry at all} (generic tabular or text data), in which case
        the universal $O(2)$ solution is the only geometric option.
\end{itemize}

\subsection{Symmetry efficiency: the main observation}

The connection between the symmetry of the data and the structure group has a
concrete consequence for efficiency. Before stating the conjecture, we must be
precise about its scope and — crucially — its definition.

\paragraph{Scope.}
Theorem~\ref{thm:flat} shows that for any \emph{finite} dataset, fiber dimension
$m = 2$ suffices for any $G \supseteq O(2)$. So for finite data,
$m^*(G_{\mathrm{data}}) = m^*(G) = 2$ and the inequality is trivially $2 \leq 2$
with no content. The conjecture is therefore a statement about the
\textbf{continuous boundary approximation problem}: given a continuous decision
boundary $\Gam \subset M$ (invariant under $G_{\mathrm{data}}$) and a target
precision $\varepsilon > 0$, what is the minimum fiber dimension needed to
approximate $\Gam$ to within $\varepsilon$?

\paragraph{The equivariance constraint.}
Note first that $G$ appears here in the doubled sense of
Remark~\ref{rem:two-roles}: an action on the base $M$ together with a
representation $\rho$ on the fibre. This is a stronger structure than the
fibrewise structure group of \S\ref{sec:nn}, and it is what makes the
comparison below meaningful.
A key subtlety: without an additional constraint, a larger structure group
$G \supseteq G_{\mathrm{data}}$ trivially \emph{helps}, since any
$G_{\mathrm{data}}$-bundle is a $G$-bundle (by inclusion) and any
$G_{\mathrm{data}}$-equivariant section is a $G$-section. This would give
$m^*(G,\varepsilon) \leq m^*(G_{\mathrm{data}},\varepsilon)$ for free ---
the \emph{opposite} direction, with no content.

The conjecture becomes non-trivial only when we require sections to be
\textbf{$G$-equivariant}: the section must transform according to a specific
representation $\rho: G \to O(m)$ of the \emph{full} structure group $G$, not
merely exist as some section of a $G$-bundle. Formally:

\begin{definition}\label{def:mstar}
Let $G_{\mathrm{data}}$ act on $M$, and let $G \supseteq G_{\mathrm{data}}$
with a unitary representation $\rho: G \to O(m)$ on the fiber $\R^m$. A section
$s: M \to E$ is \emph{$G$-equivariant} if $s(g \cdot x) = \rho(g)\, s(x)$
for all $g \in G$ and $x \in M$. For a $G_{\mathrm{data}}$-invariant continuous
boundary $\Gam$, define
\[
  m^*(G,\, \varepsilon) \;=\; \min\bigl\{m \in \mathbb{N} \;:\;
  \exists\text{ a $G$-equivariant section }s\text{ of a rank-$m$ $G$-bundle with }
  d_H\bigl(\{\varphi(s)=0\},\,\Gam\bigr) < \varepsilon \bigr\}.
\]
\end{definition}

The equivariance constraint makes $m^*(G,\varepsilon)$ genuinely sensitive to the
choice of $G$: a $G$-equivariant section must respect \emph{all} symmetries of $G$,
not just those of $G_{\mathrm{data}}$. When $G \supsetneq G_{\mathrm{data}}$, the
space of $G$-equivariant candidates is a proper sub-space of the
$G_{\mathrm{data}}$-equivariant candidates, potentially requiring more fiber
dimensions to achieve the same approximation quality.

\begin{conjecture}[Symmetry efficiency]\label{conj:symmetry}
Let $G_{\mathrm{data}} \subseteq G$ act on $M$, and let $\Gam$ be a continuous
boundary with symmetry group containing $G_{\mathrm{data}}$. With $m^*(G,\varepsilon)$
as in Definition~\ref{def:mstar}:
\[
  m^*(G_{\mathrm{data}},\,\varepsilon) \;\leq\; m^*(G,\,\varepsilon)
  \quad \text{for all } \varepsilon > 0.
\]
\end{conjecture}

The conjecture says: the matched group $G_{\mathrm{data}}$ is the most efficient
choice because its equivariant sections span the largest function space consistent
with the data symmetry. A mismatched $G \supsetneq G_{\mathrm{data}}$ imposes
additional equivariance requirements not present in $\Gam$, reducing the space of
candidates and potentially forcing a larger fiber dimension to maintain the same
approximation quality. The representation-theoretic content is: the irreducible
representations of $G_{\mathrm{data}}$ that appear in the decomposition of a
$G_{\mathrm{data}}$-equivariant approximating function need not appear as
irreps of $G$ — the larger group may require more channels to represent the
same geometric content.

\subsection{Connection to equivariant networks}

Conjecture~\ref{conj:symmetry} gives the geometric content of equivariant network
theory: matching the architecture symmetry group $G$ to the data symmetry
$G_{\mathrm{data}}$ reduces the required fiber dimension, equivalently the
number of neurons in a one-hidden-layer network. The geometric deep learning
programme \cite{bronstein2021} is therefore a special case of this conjecture, with the connection
framework giving the empirical observation a precise geometric formulation.

% ================================================================
\section{Persistent Homology as a Guide to the Structure Group}\label{sec:tda}
% ================================================================

The question of how to select the structure group $G$ from data admits a partial
answer via topological data analysis \cite{edelsbrunner2010}. Given a labelled dataset $\mathcal{D}$, apply
a Vietoris--Rips or \v{C}ech filtration to the point cloud $\{x_i\}$ and read off
the Betti numbers $\beta_0, \beta_1, \beta_2, \ldots$ at the dominant persistence
scale.

\begin{remark}[What the Betti numbers do and do not determine]\label{rem:tda-shape}
It is tempting to write a product such as
$O(2)^{\beta_1}\times SO(2)^{\beta_2}$, one factor per independent cycle. That is
not how bundles work. For a \emph{single} structure group $G$, the flat
$G$-bundles over a space $X$ are classified by $\mathrm{Hom}(\pi_1(X), G)/G$ ---
all $\beta_1$ loops are accounted for by one group, since a homomorphism assigns
a holonomy element to every cycle simultaneously. Likewise all $\beta_2$ classes
are carried by a single $SO(2)$-bundle through its Euler class in $H^2(X;\mathbb{Z})$;
one bundle already encodes the whole cohomology class. The Betti numbers
constrain the \emph{size of the invariant}, i.e.\ the rank of
$\mathrm{Hom}(H_1, \mathbb{Z}_2)$ or of $H^2$, not the number of group factors.
\end{remark}

Accordingly the correct reading is: $\beta_1$ bounds the rank of the holonomy
data a single $G$ must carry, and $\beta_2$ bounds the rank of the available
Euler/Chern classes. The pipeline is a guide to \emph{how much} topological
structure $G$ must support, not to a factorisation of $G$.

The pipeline is principled but not complete. Persistent homology captures global
topological features but misses local geometric symmetry (e.g.\ $SO(3)$
equivariance in 3D point clouds), non-abelian $\pi_1$ (Betti numbers give only the
abelianisation $\pi_1^{\mathrm{ab}} = H_1$), and ambient domain symmetries such
as translation or permutation invariance. Known domain symmetries should be
added separately.

A category remark is needed: the Betti numbers $\beta_k$ are computed from the
\emph{point cloud} $\{x_i\}$, not from the base manifold $M$. If $M = \R^n$
(contractible), all bundles over $M$ are trivial regardless of $\beta_1$.
The intended object is the topology of the \emph{support of the data
distribution} --- the sub-manifold or subset of $\R^n$ near which the data
concentrates --- which may have non-trivial loops and voids even when $M$ itself
does not. The proposal below is a heuristic for reading that support topology
from the point cloud.

\begin{heuristic}[Topological structure group]\label{prop:tda-lower}
Let $X$ denote the support of the data distribution, with Betti numbers
estimated from the point cloud. A structure group $G$ intended to represent the
topology of $X$ should be large enough that
$\mathrm{Hom}(H_1(X), G)$ and $H^2(X; \pi_1(G))$ are non-trivial whenever the
corresponding Betti numbers are; in particular $G$ abelian of rank $1$
(e.g.\ $O(2)$) already suffices to carry $\mathbb{Z}_2$-holonomy around every
$1$-cycle and an Euler class on every $2$-cycle.

This is a heuristic proposal, not a theorem. The predicate ``large enough to
represent the topology'' is not defined with sufficient precision to constitute
a mathematical statement, and the connection between point-cloud Betti numbers
and bundle structure over $M$ requires further development.
\end{heuristic}

Even when this reading captures only the topological component of the optimal
$G^*$, it provides a principled, computable starting point for structure group
selection --- better than arbitrary choice of activation function. Making the
heuristic precise is an open problem (Q2 in \S\ref{sec:open}).

\medskip
\noindent\textit{Arc 2 ends here.} With the structure group identified,
\S\ref{sec:yang-mills} (Arc~3) formulates the exact geometric BVP that was
illustrated by the examples of Arc~1, and derives the $L$-harmonic interpolant as
its closed-form solution.

% ================================================================
\section{Classification as a Boundary Value Problem}\label{sec:yang-mills}
% ================================================================

\subsection{Classification as a geometric boundary value problem}

The fundamental shift proposed in this paper is the replacement of optimisation
over parameters with a geometric existence problem: find a connection whose
parallel section satisfies sign conditions at the data points. This section
addresses \textbf{Problem A} (finite labeled data): the data enters as Dirichlet
conditions at finitely many points, and the solution interpolates between them.
We now sharpen this: the data should enter not as terms in a functional to be
minimised, but as \textbf{Dirichlet boundary conditions} on the connection.

We formulate the problem using the Yang-Mills equations $D^*F = 0$ as a unifying
language, though the honest scope of this language must be stated at the outset.
In the solved abelian case (abelian $G$, so $[F,\cdot] = 0$), $D^*F = 0$ reduces to the scalar
Laplace equation $\Delta_g f = 0$ --- classical potential theory. In the flat
$O(2)$ case, $F = 0$ identically and the Yang-Mills condition is trivially
satisfied; the relevant equation is the Dirichlet problem for the angle function
$\phi$. Genuine non-abelian Yang-Mills theory, where $D^*F = 0$ is a non-trivial
PDE on a connection with $F \neq 0$, arises only for $G = O(m)$ with $m \geq 3$
--- which is the open problem of Q1 in \S\ref{sec:open}. The Yang-Mills language
is adopted because it correctly identifies the structural type of each BVP and
gives the right Euler-Lagrange equation in the non-abelian case; it is not a
claim that new Yang-Mills theory is being done.

Given a bundle $E \to M$ with structure group $G$, Riemannian metric $g$ on
$M$ (which determines the Laplace--Beltrami operator $\Delta_g$ and hence the
Green's function $\Gr_g$ and kernel $K = \Gr_g$), bundle metric on $E$, and
labeled dataset $\mathcal{D}$, the
classification problem for separable data is the following \emph{geometric
boundary value problem}: find the connection $\Sig$ that

\begin{enumerate}[label=(\roman*), itemsep=3pt]
  \item satisfies the \textbf{Yang-Mills equations} on the complement of the data:
  \begin{equation}\label{eq:YM-BVP}
    D^*_\Sig F_\Sig \;=\; 0 \qquad \text{on } M \setminus \{x_1,\ldots,x_N\},
  \end{equation}
  \item satisfies \textbf{Dirichlet conditions} at each data point:
  \begin{equation}\label{eq:dirichlet}
    \varphi\bigl(s_\Sig(x_i)\bigr) \;=\; y_i \cdot r, \qquad i = 1,\ldots,N,
  \end{equation}
  \item is selected among the connections satisfying (i) and (ii) by the
        two-stage rule of Definition~\ref{def:solution}: first minimise the
        Yang--Mills energy $\int_M\norm{F_\Sig}^2\,\mathrm{dvol}_g$, then among
        those minimisers minimise the potential energy
        $\langle u, Lu\rangle = \norm{u}^2_{H^\nu}$
        (see Remarks~\ref{rem:two-stage} and~\ref{rem:not-dirichlet}).
\end{enumerate}

There is \textbf{no loss function, no gradient descent}. The data
enters purely as prescribed values of the section at finitely many points.

\begin{remark}[Geometric parameters vs optimisation parameters]
The framework eliminates two things: the loss function (data enters as hard
Dirichlet constraints, not a penalty) and iterative gradient descent (the solution
is a single linear solve). It does not eliminate parameters. The geometric
parameters --- length scale $\ell$ (determining the Riemannian metric $g$),
Matérn order $\nu$ (determining the RKHS regularity), Tikhonov regularisation
$\varepsilon$ (required for well-posedness, see below), and section scale $r$
(fiber metric) --- play a role analogous to $\lambda$ in the penalised formulation.
What changes is their interpretation: each is a property of the geometry of the
data manifold or bundle, not an arbitrary knob tuned against a training loss. The
length scale $\ell$ is the choice of Riemannian metric; $\nu$ is the order of the
elliptic operator; $\varepsilon$ is a conductor radius in the electrostatic
analogy. The reframing is genuine --- the parameters have precise geometric meaning
--- but parameter-free it is not.
\end{remark}

\begin{remark}[Why Dirichlet, not penalisation]
The penalised functional $\int\norm{F}^2 + \lambda\sum_i\ell(y_i,\varphi(s(x_i)))$
is a soft relaxation of the Dirichlet problem: as $\lambda \to \infty$ it converges
to the hard constraint \eqref{eq:dirichlet}. Writing the functional with finite
$\lambda$ carries the implicit assumption that approximate satisfaction of the data
conditions is acceptable --- which is the machine learning mindset. The geometric
mindset: for separable data, the constraints can and should be satisfied exactly.
The $\lambda$ functional appears naturally only for non-separable data
(\S\ref{subsec:nonsep}) or as a degenerate limit recovering backpropagation
(\S\ref{subsec:backprop}).
\end{remark}

\subsection{Exact solutions: the abelian case and $L$-harmonic interpolation}
\label{subsec:abelian}

For $G = \mathbb{R}^+$ (abelian, real line bundle), the Yang-Mills equations \eqref{eq:YM-BVP} reduce,
via the connection 1-form $A = -d\log|f|$ of \S\ref{sec:svm}, to a
\textbf{scalar elliptic equation} for the classifier $f: M \to \R$ away from
the data. We work throughout with the regularised operator
$L = (\Delta_g + \kappa^2)^\nu$ for $\nu > n/2$ and $\kappa > 0$, whose
Green's function $\Gr_L$ is positive definite and finite on the diagonal
(Remark~\ref{rem:regularisation}). The interpolant satisfies
\[
  L f \;=\; \sum_i \alpha_i y_i\, \delta_{x_i}
  \qquad\text{so}\qquad
  Lf = 0 \ \text{ on } M \setminus \{x_1,\dots,x_N\} ,
\]
i.e.\ $f$ is \textbf{$L$-harmonic} away from the data. It is not harmonic:
$\Delta_g f \neq 0$ for $\nu \neq 1$.

Among all $f$ satisfying the Dirichlet conditions, the one minimising the
\textbf{energy}
\[
  \mathcal{E}_L[f] \;=\; \langle f, Lf\rangle_{L^2}
  \;=\; \int_M \norm{(\Delta_g+\kappa^2)^{\nu/2} f}^2\,\mathrm{dvol}_g
  \;=\; \norm{f}^2_{\mathcal{H}_K}, \qquad K = \Gr_L,
\]
--- the squared $H^\nu$ norm, \emph{not} the Dirichlet energy --- is:

\begin{equation}\label{eq:harmonic}
  \boxed{f(x) \;=\; \sum_i \alpha_i\,y_i\,\Gr_L(x, x_i).}
\end{equation}

\begin{remark}[Why not the Dirichlet energy]\label{rem:not-dirichlet}
The Dirichlet energy $\int_M|\nabla f|^2$ is the case $\nu = 1$, $\kappa = 0$ of
$\mathcal{E}_L$, and for point data it is the wrong functional in a strong sense.
Point evaluation is bounded on $H^\nu(M)$ only for $\nu > n/2$; for $n \geq 2$
this excludes $\nu = 1$. Points have zero $H^1$-capacity, so
$\inf\{\int|\nabla f|^2 : f(x_i) = y_i r\} = 0$ and is not attained --- the
constrained Dirichlet problem with point data has no solution. Consistently, the
$n=2$ Green's function $-\tfrac{1}{2\pi}\log|x-y|$ has infinite Dirichlet energy.
The condition $\nu > n/2$ that makes $\Gr_L$ a reproducing kernel is exactly the
condition that makes the interpolation problem well posed, and it is incompatible
with the Dirichlet energy in dimension $n \geq 2$.
\end{remark}

Note that the connection curvature $F = dA = d(-d\log|f|) = 0$ vanishes
identically away from $\Gam$ for \emph{any} smooth $f$ --- so it is
$\mathcal{E}_L$, the energy of the potential, and not the curvature of the
induced connection, that selects the interpolant. The coefficients $\alpha_i$
are determined by the \textbf{capacitance matrix equation}: substituting
\eqref{eq:harmonic} into the Dirichlet conditions \eqref{eq:dirichlet} gives a
\emph{linear system}:
\begin{equation}\label{eq:capacitance}
  \mathbf{K}\,\boldsymbol{\alpha} \;=\; \mathbf{v},
  \qquad K_{ij} = \Gr_L(x_i, x_j),\quad v_i = y_i \cdot r,
\end{equation}
solved by $\boldsymbol{\alpha} = \mathbf{K}^{-1}\mathbf{v}$. This is a single
linear system inversion --- no iterations, no gradient steps. The classifier is
the \textbf{$L$-harmonic interpolant} of the prescribed values at the data
points, extended to all of $M$ by the minimum-$\mathcal{E}_L$ condition.

\subsection{The kernel is the Green's function}\label{subsec:kernel}

The $L$-harmonic interpolant \eqref{eq:harmonic} has the same structure as an RKHS
interpolant $f(x) = \sum_i \alpha_i y_i K(x_i,x)$. The identification is exact:

\begin{theorem}[RKHS interpolation is the abelian $L$-harmonic interpolant]%
\label{thm:svm-ym}
Let $L_K$ be a self-adjoint positive-definite elliptic operator on $(M,g)$
whose Green's function $K(x,y) = \Gr_{L_K}(x,y)$ is \emph{finite on the
diagonal} (e.g.\ $L_K = (\Delta_g+\kappa^2)^\nu$ with $\nu > n/2$,
$\kappa > 0$). Then:
\begin{enumerate}[label=(\roman*), itemsep=3pt]
  \item The kernel $K$ is the reproducing kernel of the Hilbert space
        $\mathcal{H}_K = \mathrm{range}(L_K^{-1/2})$ with norm
        $\norm{f}^2_{\mathcal{H}_K} = \langle f, L_K f\rangle$.
  \item The RKHS interpolant $f(x) = \sum_i\alpha_i y_i K(x_i,x)$ with
        $\boldsymbol{\alpha} = \mathbf{K}^{-1}\mathbf{v}$ is the unique
        minimum-$\mathcal{H}_K$-norm function satisfying the Dirichlet conditions
        $f(x_i) = y_i \cdot r$. It solves the abelian $L$-harmonic interpolation
        problem \eqref{eq:harmonic}--\eqref{eq:capacitance} with operator $L_K$.
  \item Write $\Gr_{L}$ for the Green's function of $L = (\Delta_g+\kappa^2)^{\nu}$.
        Then $\mathcal{H}_{\Gr_L} = H^{\nu}(M,g)$ (Sobolev space of order $\nu$)
        and the solution minimises $\int\norm{(\Delta_g+\kappa^2)^{\nu/2}f}^2$.
        In $\R^n$ this Green's function is the Mat\'ern covariance of
        \emph{smoothness} $\nu_M = \nu - n/2$; the operator exponent $\nu$ and
        the Mat\'ern smoothness parameter $\nu_M$ differ by $n/2$ and should
        not be conflated.
  \item Different kernels correspond to different operators $L_K$, equivalently
        different Riemannian metrics and regularisation orders on $M$. The choice
        of kernel is the choice of geometric structure on the data manifold.
\end{enumerate}
\end{theorem}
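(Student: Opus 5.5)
The plan is to prove the four parts in order, with (i) carrying almost all the analytic weight and (ii)--(iv) following from it. Throughout, I take $M$ compact without boundary (or $\R^n$ with the Fourier transform in place of the eigenbasis). Since $L_K$ is self-adjoint, positive definite and elliptic, it has an $L^2$-orthonormal eigenbasis $\{e_k\}$ with eigenvalues $\lambda_k \geq c > 0$. The Green's function then has the spectral expansion
\[
  K(x,y) = \sum_k \lambda_k^{-1} e_k(x) e_k(y).
\]
The hypothesis that $K$ is finite on the diagonal means $K(x,x) = \sum_k \lambda_k^{-1} e_k(x)^2 < \infty$. For $L = (\Delta_g + \kappa^2)^\nu$ I will verify this from Weyl's law: $\lambda_k \sim k^{2\nu/n}$ and $\sum_k e_k(x)^2 \lambda_k^{-1}$ converges by the local Weyl law exactly when $\nu > n/2$.

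\textbf{Part (i).} Define $\mathcal{H} = \mathrm{range}(L_K^{-1/2})$, consisting of the functions $f = \sum_k f_k e_k$ with $\sum_k \lambda_k f_k^2 < \infty$, and equip it with $\langle f, h\rangle_{\mathcal{H}} = \sum_k \lambda_k f_k h_k = \langle f, L_K h\rangle$. This is a Hilbert space, isometric to $L^2$ via $L_K^{1/2}$. Two things must be checked.
\begin{itemize}
  \item The section $K(\cdot,x)$ has coefficients $\lambda_k^{-1} e_k(x)$, so $\norm{K(\cdot,x)}^2_{\mathcal{H}} = K(x,x) < \infty$. Hence $K(\cdot,x) \in \mathcal{H}$.
  \item For the reproducing property, $\langle f, K(\cdot,x)\rangle_{\mathcal{H}} = \sum_k \lambda_k f_k \lambda_k^{-1} e_k(x) = f(x)$.
\end{itemize}
The series defining $f(x)$ converges absolutely by Cauchy--Schwarz against $K(x,x)$, so point evaluation is bounded. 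Uniqueness of the reproducing kernel (Moore--Aronszajn) then identifies $\mathcal{H} = \mathcal{H}_K$.

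\textbf{Part (ii).} This is the representer argument of Kimeldorf--Wahba. Positive definiteness of $L_K$ gives linear independence of the $K(\cdot,x_i)$ for distinct points, so $\mathbf{K}$ is invertible. Let $V = \mathrm{span}\{K(\cdot,x_i)\}$. Any feasible $h$ decomposes as $h = h_V + h_\perp$, and reproducing gives $h_\perp(x_i) = 0$. So $h_V$ is also feasible, and $\norm{h}^2 = \norm{h_V}^2 + \norm{h_\perp}^2$. The minimiser therefore lies in $V$, where feasibility is exactly the capacitance system \eqref{eq:capacitance}. Uniqueness follows from strict convexity. Applying $L_K$ to $K(\cdot,x_i)$ gives $\delta_{x_i}$, so $L_K f = \sum_i \alpha_i y_i \delta_{x_i}$ and $f$ is $L_K$-harmonic off the data. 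This matches \eqref{eq:harmonic}; the $y_i$ in the paper's convention can be absorbed into the $\alpha_i$.

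\textbf{Part (iii).} The norm $\sum_k (\lambda_k^{\Delta} + \kappa^2)^\nu f_k^2$ is the spectral definition of the $H^\nu$ norm on a compact manifold. It also equals $\norm{(\Delta_g + \kappa^2)^{\nu/2} f}^2_{L^2}$, so parts (i)--(ii) give the minimisation claim. On $\R^n$, the Fourier transform of $\Gr_L$ is $(|\xi|^2 + \kappa^2)^{-\nu}$. Comparing with the Matérn spectral density, which is proportional to $(|\xi|^2 + \kappa^2)^{-(\nu_M + n/2)}$, gives $\nu_M = \nu - n/2$. This is the Whittle/Lindgren--Rue--Lindström computation and I will cite it.

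\textbf{Part (iv).} This is definitional. The map $L_K \mapsto \Gr_{L_K}$ is injective, with inverse given by taking the inverse integral operator. Varying $g$ changes $\Delta_g$, and varying $\nu$ changes the regularisation order, so the kernel and the geometric data determine each other within this class.

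The main obstacle is the diagonal-finiteness and convergence bookkeeping in (i): making sure $K(\cdot,x)$ lies in $\mathcal{H}$ and that pointwise evaluation of $H^\nu$ functions is continuous. My first approach is the local Weyl law bound $\sum_{\lambda_k \leq \Lambda} e_k(x)^2 = O(\Lambda^{n/2})$ uniformly in $x$. Summation by parts against $\lambda^{-\nu}$ then converges precisely when $\nu > n/2$. As a fallback, the Sobolev embedding $H^\nu \hookrightarrow C^0$ yields the same bound abstractly.
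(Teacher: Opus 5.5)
Your proposal is correct and follows the route the paper itself indicates. The paper gives no separate proof; it attributes (i)--(ii) to the Kimeldorf--Wahba representer theorem and (iii) to the Whittle/Lindgren--Rue--Lindstr\"om correspondence (Remark~\ref{rem:known}), and your spectral construction of $\mathrm{range}(L_K^{-1/2})$, orthogonal-projection argument and comparison of Fourier symbols are the standard proofs of those cited facts. Your remark about absorbing the $y_i$ into the $\alpha_i$ is necessary rather than cosmetic: with $\boldsymbol{\alpha} = \mathbf{K}^{-1}\mathbf{v}$ and $v_i = y_i r$, the literal formula $\sum_i \alpha_i y_i K(x_i,\cdot)$ counts the labels twice and breaks the Dirichlet conditions (already for a single point with $y_1=-1$ it gives $f(x_1)=r$).
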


\begin{corollary}\label{cor:svm-ym}
The RKHS interpolant solves the electrostatic capacitance problem: each data
point $(x_i, y_i)$ acts as a conductor at prescribed potential $y_i \cdot r$,
and $f$ is the resulting Green's function field. The weights
$\boldsymbol{\alpha} = \mathbf{K}^{-1}\mathbf{v}$ are the capacitance
coefficients. No optimisation --- classical potential theory.

The hard-margin SVM is a related but distinct object: it minimises
$\|f\|^2_{\mathcal{H}_K}$ subject to \emph{inequality} constraints
$y_i f(x_i) \geq 1$ (the margin), so most $\alpha_i$ are zero. The SVM is
the RKHS interpolant \emph{restricted to its own support vectors} --- the active
constraints $y_i f(x_i) = 1$ --- whose identity is determined endogenously by
the quadratic programme. The two estimators coincide if and only if all training
points happen to be support vectors.
\end{corollary}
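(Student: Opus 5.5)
The plan is to derive Corollary~\ref{cor:svm-ym} directly from Theorem~\ref{thm:svm-ym}(ii), and to handle the two paragraphs separately: the capacitance reading is a reinterpretation of the interpolation identity, while the SVM comparison uses the KKT conditions of the hard-margin programme.

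For the first paragraph, I start from the representation $f = \sum_i \alpha_i y_i K(\cdot,x_i)$ with $\mathbf{K}\boldsymbol{\alpha} = \mathbf{v}$ supplied by Theorem~\ref{thm:svm-ym}(ii). Since $K = \Gr_L$, applying $L$ in the distributional sense gives $Lf = \sum_i \alpha_i y_i \delta_{x_i}$. So $f$ is the field generated by point charges $q_i = \alpha_i y_i$ at the $x_i$, and it is $L$-harmonic off the data. The Dirichlet conditions $f(x_i) = y_i r$ say that the conductor at $x_i$ is held at potential $v_i$. The linear map from prescribed potentials $\mathbf{v}$ to charges $\mathbf{q}$ is $\mathbf{K}^{-1}$, and this is by definition the capacitance (Maxwell) matrix of the conductor system. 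Hence the weights $\boldsymbol{\alpha}$ are the capacitance coefficients. This requires $\mathbf{K}$ to be invertible. That holds for distinct points because $K$ is strictly positive definite: the reproducing property in Theorem~\ref{thm:svm-ym}(i) gives $\mathbf{c}^T\mathbf{K}\mathbf{c} = \|\sum_i c_i K(\cdot,x_i)\|^2_{\mathcal{H}_K}$, and the functions $K(\cdot,x_i)$ are linearly independent since $H^\nu$ separates finitely many points (bump functions).

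For the second paragraph, I write the hard-margin problem as minimising $\|f\|^2_{\mathcal{H}_K}$ over the closed convex set $\{f : y_i f(x_i) \ge 1\}$. This set is nonempty by the interpolation result above, and the objective is strictly convex, so the minimiser is unique. The representer argument, projecting onto $\mathrm{span}\{K(\cdot,x_i)\}$, puts the minimiser in the finite span. The KKT conditions then give $f = \sum_i \alpha_i y_i K(\cdot,x_i)$ with $\alpha_i \ge 0$ and complementary slackness $\alpha_i(y_i f(x_i) - 1) = 0$.

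Let $S$ be the active set. I claim $f$ equals the $\mathcal{H}_K$-minimal interpolant of the data $f(x_j) = y_j$ for $j \in S$, that is, Theorem~\ref{thm:svm-ym}(ii) applied to the subset $S$ with $r = 1$. To see this, note that $f$ lies in $\mathrm{span}\{K(\cdot,x_j): j \in S\}$ and satisfies these equalities. Any element of that span satisfying the equalities is the minimal-norm interpolant, because any other interpolant differs from it by a function vanishing at the $x_j$, and such a function is $\mathcal{H}_K$-orthogonal to the span. This is the precise meaning of ``RKHS interpolant restricted to its own support vectors.''

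The main obstacle is the ``if and only if'' between the two estimators, where the scale $r$ and the distinction between active constraints and nonzero multipliers need care; I would take $r = 1$ to match the margin. For the first direction, suppose every point is active with every $\alpha_i$ nonzero. Then the SVM solution interpolates all the data, and uniqueness in Theorem~\ref{thm:svm-ym}(ii) makes it equal to the full interpolant. For the converse, suppose the full interpolant $f^\ast$ is the SVM solution. It satisfies every constraint with equality, so every point is active, meaning it lies on the margin. If some multiplier $\alpha_i$ is zero, the two estimators still coincide. The statement therefore has to be read with ``support vector'' meaning an active constraint, not a nonzero multiplier, and I would state that convention explicitly. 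Under that reading, coincidence implies all points are active, and all points active implies the SVM solution interpolates, so it equals $f^\ast$ by uniqueness.
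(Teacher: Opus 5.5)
Your proposal is correct and follows the paper's route. The paper gives no separate proof: it reads the first paragraph off Theorem~\ref{thm:svm-ym}(ii) and the capacitance system \eqref{eq:capacitance}, and defers the SVM part to standard KKT theory (Remark~\ref{rem:known}); your explicit normalisation $r=1$, together with reading support vectors as active constraints (as the statement itself defines them), is what makes the ``if and only if'' hold. One slip, inherited from the paper's notation: with $f=\sum_i\alpha_i y_i K(\cdot,x_i)$ the Dirichlet conditions give $\mathbf{K}\,(\alpha_1y_1,\ldots,\alpha_Ny_N)^{T}=\mathbf{v}$ rather than $\mathbf{K}\boldsymbol{\alpha}=\mathbf{v}$, so it is your charges $q_i=\alpha_i y_i$, not the $\alpha_i$, that equal $(\mathbf{K}^{-1}\mathbf{v})_i$.
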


\begin{remark}[Relationship to known results]\label{rem:known}
The representer theorem for interpolation \cite{kimeldorf1971some} establishes
that the minimum-$\mathcal{H}_K$-norm function satisfying $f(x_i) = y_i r$ has
the form $f = \sum_i \alpha_i K(x_i,\cdot)$ with $\boldsymbol{\alpha} =
\mathbf{K}^{-1}\mathbf{v}$ --- this is the mathematical content of
Theorem~\ref{thm:svm-ym}(i)--(ii). The identification of Mat\'ern kernels as
Green's functions of $(\Delta_g+\kappa^2)^\nu$ is the SPDE correspondence of
Lindgren, Rue, and Lindstr\"om \cite{lindgren2011spde}. The SVM dual solution
being supported on active constraints is standard KKT theory \cite{scholkopf2002}.
What Theorem~\ref{thm:svm-ym} and Corollary~\ref{cor:svm-ym} add is the
geometric interpretation: the linear system is the capacitance equation of
classical potential theory; the solution is the harmonic potential of point
sources at $\{x_i\}$; and the decision boundary is the zero equipotential.
The \emph{dictionary} between the RKHS interpolant and the Dirichlet BVP is the
contribution, not the analytic correspondence itself.
\end{remark}

\begin{remark}
The Gaussian (RBF) kernel corresponds to the limit $\nu\to\infty$ of the Mat\'ern
family --- an infinitely smooth Riemannian structure --- and has no finite-order
Green's function representation. Every other common kernel (Mat\'ern, polynomial,
Laplacian) has an exact Green's function interpretation. Kernels whose RKHS is
dense in $C(M)$ --- equivalently, whose Green's function span is total --- are
called \emph{universal kernels} \cite{steinwart2001}; these are precisely the
kernels for which the geometric universality theorem (Theorem~\ref{thm:universal})
applies in its strongest form.
\end{remark}

\subsection{Exact solutions: prescribed angle values for $O(2)$ on $\R^2$}
\label{subsec:O2}

For $G = O(2)$ on $M = \R^2$, the section angle $\phi$ satisfies
$\Delta\phi = 0$ on $\R^2\setminus\{x_i\}$ with prescribed values:
\[
  \phi(x_i) \;=\; \begin{cases} 0 & y_i = +1 \\ \pi & y_i = -1. \end{cases}
\]
The Green's function of $\Delta$ on $\R^2$ is $\Gr(x,y) = -\frac{1}{2\pi}\log|x-y|$,
giving the formal interpolant:
\begin{equation}\label{eq:phi-harmonic}
  \phi(x) \;=\; -\sum_i c_i\log|x - x_i| + \phi_0,
  \qquad \mathbf{G}\,\mathbf{c} = \mathbf{u},
\end{equation}
where $G_{ij} = -\frac{1}{2\pi}\log|x_i-x_j|$ and
$u_i = \phi(x_i) \in \{0,\pi\}$.

\begin{remark}[Regularisation requirement]\label{rem:regularisation}
The diagonal entries $G_{ii} = -\frac{1}{2\pi}\log 0 = +\infty$ make the
capacitance matrix $\mathbf{G}$ ill-defined for idealised point sources. In the
electrostatic analogy, point charges have infinite self-energy; the capacitance
problem is well-posed only for conductors of finite radius $\delta > 0$.
Algebraically, replacing $\Delta$ by $L = (\Delta + \kappa^2)^\nu$ gives a
Green's function $\Gr_L(x,y)$ that is finite on the diagonal and makes
$\mathbf{G}$ positive definite. The parameter $\kappa$ (equivalently the length
scale $\ell$ in the Matérn kernel) is the geometric regularisation parameter ---
the conductor radius in the electrostatic picture. This is honest: the method
requires a regularisation parameter; it gives it geometric meaning rather than
tuning it against a loss.
\end{remark}

This is the \emph{stream function of a 2D point vortex gas}: each data point is
a logarithmic vortex of strength $c_i$ determined by its prescribed angle value.
The decision boundary $\Gam = \{\phi = \pi/2\}$ is the equipotential at height
$\pi/2$. For the regularised operator $L = (\Delta+\kappa^2)^\nu$ the solution
$\phi(x) = \sum_i c_i \Gr_L(x,x_i) + \phi_0$ is a superposition of Green's
potentials at fixed positions --- classical 2D potential theory, no numerical
iterations. (The positions are fixed, so no vortex dynamics arise.) The
unregularised expression with $\log$ kernels is formal only: see
Remark~\ref{rem:regularisation}.

\begin{remark}[XOR in the regularised vortex picture]\label{rem:xor-vortex}
For XOR the prescribed values are $\phi(0,0) = \phi(1,1) = \pi$ and
$\phi(0,1) = \phi(1,0) = 0$. These cannot be imposed on the unregularised
potential $\phi(x) = -\sum_i c_i\log|x-x_i| + \phi_0$, which diverges at every
data point; the matrix $\mathbf{G}$ has $G_{ii} = +\infty$ and $\mathbf{c}$ is
undefined (Remark~\ref{rem:regularisation}). With the regularised kernel
$\Gr_L$, $L = (\Delta+\kappa^2)^\nu$, the capacitance system is well posed and
yields a decision boundary
\[
  \Gam \;=\; \Bigl\{x : \textstyle\sum_i c_i\,\Gr_L(x,x_i) = \tfrac{\pi}{2} - \phi_0\Bigr\},
\]
a smooth closed curve separating the two diagonal pairs. This differs from the
parallel-stripe construction $|x^1-x^2| = 1/\sqrt{2}$ of
Theorem~\ref{thm:flat}, and the two are not competing answers to one question:
Theorem~\ref{thm:flat} is an \emph{existence} statement, exhibiting one
admissible pair with no claim of canonicity, whereas the construction here is
the \emph{selected} solution of Definition~\ref{def:solution}. Both are
admissible; only the second minimises $\mathcal{E}_L$ at stage~(ii). In the
limit $\kappa \to 0$ the boundary approaches an Apollonius-type locus,
but that limit is formal --- the point-source problem itself has no solution.
\end{remark}

\begin{remark}[Different tuples, different solutions]\label{rem:tuple-dependence}
The abelian construction of \S\ref{subsec:abelian} minimises $\mathcal{E}_L$ over
the classifier $f$ itself; the $O(2)$ construction of this section minimises it
over the fibre angle $\phi$, with $\varphi = (1,0)$ so that the classifier is
$r\cos\phi$. These give different decision boundaries, but they are solutions to
different problems: the tuples
$(M,g,E,\R^+,\mathrm{id},\mathcal{D})$ and $(M,g,E,O(2),(1,0),\mathcal{D})$
differ in both structure group and readout. Definition~\ref{def:solution} selects
a unique solution \emph{given a tuple}; it does not privilege one tuple over
another. Choosing the tuple is the modelling step, and \S\ref{sec:tda} discusses
how the data topology can inform it.
\end{remark}

\subsection{The non-separable case: geometric relaxation}
\label{subsec:nonsep}

When no connection satisfying the geometric BVP admits a parallel section
fulfilling all Dirichlet conditions exactly
(non-separable data), we relax the equality constraints to \emph{inequality
constraints}: require only that the section values lie in the correct
open half-fiber:
\[
  \varphi\bigl(s_\Sig(x_i)\bigr) \cdot y_i \;>\; 0 \qquad \forall\,i.
\]
The two-stage energy-minimising connection satisfying all inequalities is the constrained
geometric BVP. The geometric ``slack'' at data point $x_i$ is the signed
distance $\varphi(s_\Sig(x_i)) \cdot y_i / r$ --- how far inside the correct
half-fiber the section value lies. The SVM margin is the minimum of these
geometric slacks over the data.

\begin{remark}[The $\lambda$ functional as soft relaxation]
The penalised functional $\mathcal{F}[\Sig] = \int\norm{F}^2 + \lambda\sum_i
\ell(y_i,\varphi(s(x_i)))$ is the soft-margin (Lagrangian) relaxation of the
inequality-constrained problem, with $\lambda$ the inverse penalty for constraint
violation. It is a useful computational approximation when the constraints are
difficult to enforce directly, but it is not the primary formulation: it
introduces a free parameter $\lambda$ with no geometric meaning.
\end{remark}

\subsection{Gradient descent as degenerate Yang-Mills}
\label{subsec:backprop}

When $M$ is contractible and $F_\Sig = 0$ (flat connection), the Yang-Mills
equations \eqref{eq:YM-BVP} are satisfied trivially. The problem reduces to
finding a flat connection satisfying the sign conditions --- Theorem~\ref{thm:flat}
provides one. If instead of the exact geometric solution we seek to approximate it
by gradient flow on the space of flat connections:
\[
  \frac{\partial\Sig}{\partial t} \;=\; -\frac{\partial}{\partial\Sig}
  \sum_i \ell\bigl(y_i,\,\varphi(s_\Sig(x_i))\bigr),
\]
this is gradient descent on the data loss. If one could parametrise the space of
flat connections by the network weights, $\Sig \leftrightarrow (W_1, W_2)$, the
flow would be backpropagation.

\begin{remark}[The identification is structural, not constructive]
\label{rem:backprop-gap}
We have not constructed the correspondence $\Sig \leftrightarrow (W_1,W_2)$.
The obstruction is concrete: the linear network $x \mapsto W_2W_1x$ vanishes on
a subspace, so it does not define a nowhere-zero section and therefore induces
no connection by the construction of \S\ref{sec:curvature}. What holds is a
structural analogy --- both are flows on a space of flat objects driven by a
data loss, both converge asymptotically rather than solving a boundary value
problem --- and this is the sense in which we call backpropagation the
flat-connection limit throughout. Making the correspondence explicit would
require either an activation (so the section stays away from zero) or a
different parametrisation of the flat connections; we do not do so here.
\end{remark}

\begin{remark}[Backpropagation as the flat-connection limit]
Backpropagation is an iterative numerical method for the flat-connection
($F=0$) regime. It is \emph{degenerate} in the precise technical sense: it
restricts to flat connections, discarding curvature information, and replaces
the direct geometric solution with a gradient flow converging asymptotically.
The geometric BVP gives the solution in a single step, at the cost of
choosing a kernel (length scale $\ell$, order $\nu$). Neither approach is
universally superior: the iterative method is flexible and parameter-free
in a different sense; the direct solve is exact and fast for separable data.
\end{remark}

\subsection{The exact solution landscape}

\begin{center}
\renewcommand{\arraystretch}{1.3}
\begin{tabular}{@{}llll@{}}
\toprule
\textbf{Case} & \textbf{$G$} & \textbf{Solution method} & \textbf{Key result} \\
\midrule
Abelian / RKHS & $\mathbb{R}^+$ & Harmonic interpolation / capacitance
  & $\boldsymbol{\alpha} = \mathbf{K}^{-1}\mathbf{v}$ (linear solve) \\
$O(2)$ on $\R^2$ & $O(2)$ & 2D vortex gas / complex analysis
  & Apollonius-type locus \\
$S^2$, $e=k$ & $SO(2)$ & Dirac monopole + capacitance
  & Forced curvature, $k$ zeros \\
Non-abelian & $O(m)$ & Non-abelian harmonic maps
  & ADHM / Nahm transform \\
Non-separable & any & Inequality-constrained geometric BVP
  & Geometric margin maximisation \\
Backpropagation & any & Degenerate flat gradient flow
  & Iterative approximation only \\
\bottomrule
\end{tabular}
\end{center}

\medskip
In every well-posed case, the solution is found by \textbf{solving a linear or
geometric PDE system exactly} --- never by iterative parameter optimisation. The
framework makes the geometry explicit and the algorithm direct.

% ================================================================
\section{Numerical Experiment: Geometric vs.\ Neural-Network Solution}%
\label{sec:experiment}
% ================================================================

To illustrate the theoretical framework concretely, we compare the geometric
solution (Theorem~\ref{thm:svm-ym}) with a standard neural network on the same
problem. Since the geometric solution is an interpolant (no optimisation loop),
we report \emph{fit accuracy} on the training set (tautologically 100\% for any
interpolant) and \emph{test accuracy} on a held-out set as the meaningful
evaluation. For the neural network, both training and test accuracy are reported.

\subsection{Setup}

We generate $n_{\mathrm{train}} = 60$ labelled training points and
$n_{\mathrm{test}} = 300$ held-out test points from a two-moons distribution
(thirty points per class, Gaussian noise $\sigma = 0.13$).
The classification problem has tuple
\[
  \bigl(M = \R^2,\; g = g_{\mathrm{Eucl}},\; E = M\times\R,\;
        G = \mathbb{R}^+,\; \varphi = \mathrm{id},\; \mathcal{D}_{\mathrm{moons}}\bigr),
\]
where $g_{\mathrm{Eucl}} = (dx^1)^2+(dx^2)^2$ is the canonical flat metric on
$\R^2$, and its regularised Green's function is the Mat\'ern kernel.

\textbf{Geometric solution.}  We solve the abelian $L$-harmonic interpolation
problem (Theorem~\ref{thm:svm-ym}) on the training set.  The kernel is the
Mat\'ern-$3/2$ Green's function:
\[
  K(x,y) \;=\;
  \Bigl(1 + \tfrac{\sqrt{3}\,\|x-y\|}{\ell}\Bigr)
  \exp\!\Bigl(-\tfrac{\sqrt{3}\,\|x-y\|}{\ell}\Bigr),
  \qquad \ell = 0.55,
\]
which, by the correspondence $\nu = \nu_M + n/2$ of
Theorem~\ref{thm:svm-ym}(iii), is the Green's function of
$(\Delta + \kappa^2)^{5/2}$ on $\R^2$ (smoothness $\nu_M = 3/2$, $n = 2$).  The weights $\boldsymbol{\alpha} =
\mathbf{K}^{-1}\mathbf{v}$ are computed by a single $60\!\times\!60$ linear
system solve: no iterations, no gradient descent. A Tikhonov regularisation
$\varepsilon I$ ($\varepsilon = 10^{-6}$) is added to $\mathbf{K}$; this is
the geometric regularisation parameter (Remark~\ref{rem:regularisation}).

\textbf{Neural network.}  We train a \textbf{one-hidden-layer} network
$2 \to 64 \to 1$ with \texttt{tanh} activation (Adam optimiser,
learning rate $10^{-3}$, up to $5{,}000$ iterations) on the same training
set. This architecture matches the theoretical object $f = W_2\,\sigma(W_1 x)$
analysed in \S\ref{sec:nn}, giving a fair comparison: both models produce
classifiers of the same structural form, found by different methods.

\subsection{Results}

\begin{figure}[ht]
\centering
\includegraphics[width=\linewidth]{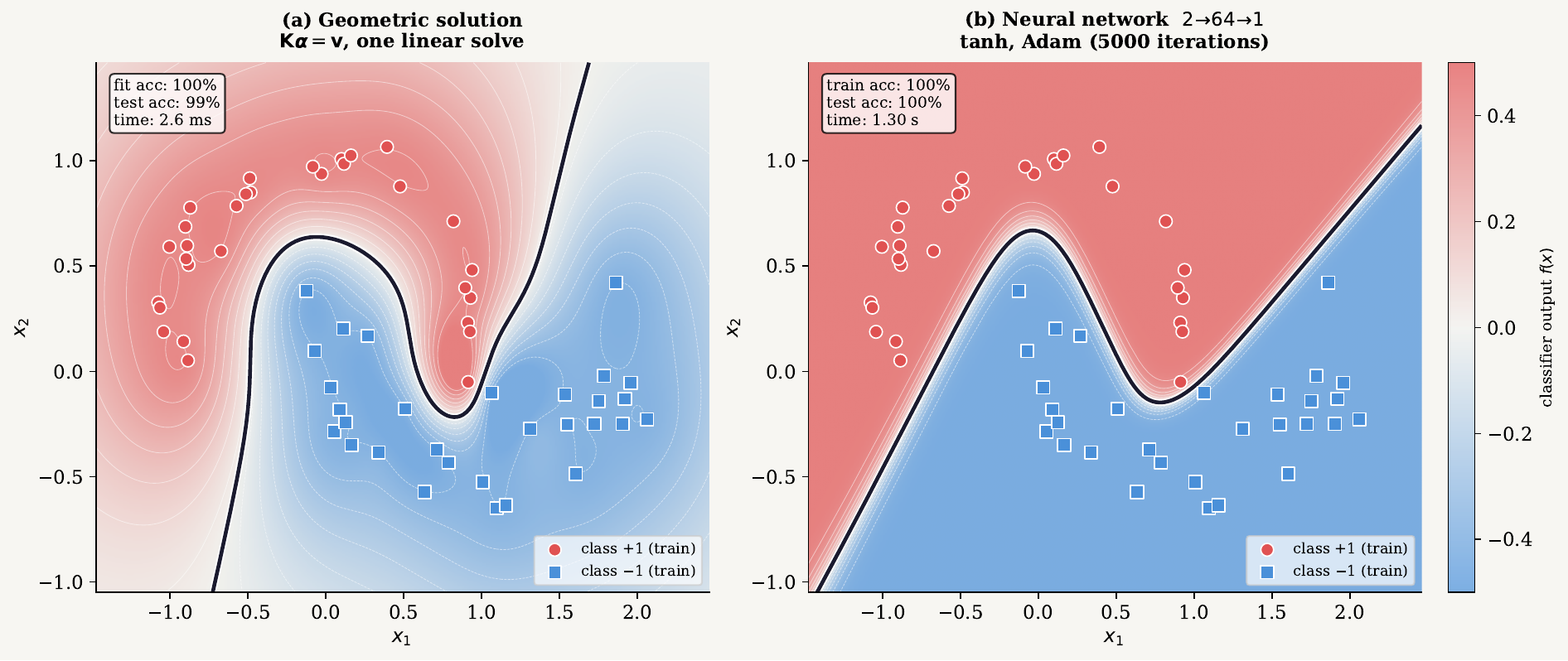}
\caption{Geometric solution \emph{(a)} vs.\ one-hidden-layer neural network
\emph{(b)} on the two-moons dataset ($n_{\mathrm{train}} = 60$,
$n_{\mathrm{test}} = 300$, Mat\'ern-$3/2$ kernel, $\ell = 0.55$). Training
points shown as markers; background field is $f(x)$ on a dense grid; dark
curve is the decision boundary.
\emph{(a)} Geometric solution: single $60\times 60$ linear solve, $2.6\,\mathrm{ms}$.
Fit acc.\ (training): $100\%$ by construction; test acc.: $99\%$.
\emph{(b)} One-hidden-layer network $2\to 64\to 1$, tanh: $5{,}000$ Adam
gradient steps, $1.30\,\mathrm{s}$ (gradient norm not converged at
$\mathrm{tol}=10^{-9}$; accuracy fully converged).
Train acc.: $100\%$; test acc.: $100\%$.}
\label{fig:geo-vs-nn}
\end{figure}

Both models achieve high test accuracy on this dataset; the comparison
demonstrates the paper's central claim about \emph{method}. The geometric
solution is an exact interpolant obtained from a single $60\!\times\!60$
linear system solve in $2.6\,\mathrm{ms}$. The one-hidden-layer network
requires $5{,}000$ gradient steps ($1.30\,\mathrm{s}$, a ${\approx}500\times$
slowdown) with the gradient-norm tolerance not reached at $\mathrm{tol}=10^{-9}$
--- though accuracy itself is fully converged. The experiment illustrates the
flat-connection limit of \S\ref{subsec:backprop}: gradient descent iteratively
approximates the same geometric problem that the linear solve resolves directly.

The decision boundaries differ qualitatively. The geometric boundary (panel
a) is the zero equipotential of the Green's function field of $L$: the
electrostatic picture of Corollary~\ref{cor:svm-ym}, where each training
point acts as a conductor at prescribed potential $\pm r$ and $\nabla f
\perp \Gamma$ at every boundary point. The equipotential lines make the
full field structure visible. The network boundary (panel b) is the emergent
zero of an iteratively optimised composite function, with no such natural
geometric structure.

The background colour field (Figure~\ref{fig:geo-vs-nn}, left) displays the
$L$-harmonic interpolant $f(x) = \sum_i \alpha_i K(x, x_i)$ over $\R^2$.

\subsection{Implementation note}

The geometric solution requires only \texttt{numpy} and \texttt{scipy}: assemble
the $N\times N$ kernel matrix $\mathbf{K}$ (Mat\'ern-$3/2$), add Tikhonov
regularisation, and call \texttt{numpy.linalg.solve}. The complete implementation,
including the code used to generate Figure~\ref{fig:geo-vs-nn}, is available at
\url{https://github.com/catalinvasii/geometry-classification}.

% ================================================================
\section{The Geometric Dictionary}\label{sec:dictionary}
% ================================================================

The central contribution of this paper is not a new theorem in Yang-Mills theory
--- the equations we solve are classical potential theory, complex analysis, and
harmonic map theory, all well-understood. The contribution is a precise
\textbf{dictionary} between machine learning concepts and geometric objects,
which reveals that the two theories are describing the same mathematical content
in different languages.

There are two independent geometric choices in the framework (see \S\ref{sec:nn}
for the full treatment): the \textbf{structure group} $G$ acting on the fiber
$E_x$ (replacing the activation function) and the \textbf{Riemannian metric} $g$
determining the base geometry (replacing the kernel). These govern different parts
of the bundle and must not be conflated.

\begin{center}
\small
\renewcommand{\arraystretch}{1.25}
\begin{tabularx}{\linewidth}{@{} l X l @{}}
\toprule
\textbf{Machine learning} & \textbf{Geometric framework} & \textbf{Status} \\
\midrule
Binary classifier          & Horizontal section $s: M \to E$
  & Definition \\
Training labels $\{y_i\}$  & Dirichlet conditions $\varphi(s(x_i)) = y_i r$
  & Definition \\
Training algorithm         & Harmonic interpolation BVP: $\Delta_g f=0$, Dirichlet at $x_i$
  (abelian); general YMH variational principle in \cite{Vasii2026Paper2}
  & Definition \\
\midrule
\multicolumn{3}{@{}l}{\textit{Fiber geometry (replaces activations):}} \\
Activation function        & Structure group $G$ acting on fiber $E_x$
  & Analogy \\
Network width $m$          & Fiber dimension
  & Direct \\
Equivariant networks       & $G = G_{\mathrm{data}}$
  & Conj.~\ref{conj:symmetry} \\
Data topology (TDA)        & Betti numbers bound the holonomy/Euler data $G$ must carry
  & Heuristic~\ref{prop:tda-lower} \\
\midrule
\multicolumn{3}{@{}l}{\textit{Base geometry (replaces kernel choice):}} \\
Kernel $K(x,y)$            & Green's function $\Gr_{(\Delta_g+\kappa^2)^\nu}(x,y)$, $\nu>n/2$
  & \textbf{Proved} \\
RKHS interpolation ($\alpha_i$) & Linear solve: $\mathbf{K}\boldsymbol{\alpha} = \mathbf{v}$ (dense)
  & \textbf{Proved} \\
Hard-margin SVM            & RKHS interpolant on support vectors (active Dirichlet set)
  & Cor.~\ref{cor:svm-ym} \\
Decision boundary          & Equipotential of the Green's function field (abelian case)
  & \textbf{Proved} \\
Choice of kernel           & Choice of Riemannian metric $g$ on $M$
  & \textbf{Proved} \\
\midrule
\multicolumn{3}{@{}l}{\textit{The degenerate limit:}} \\
Backpropagation            & Flat gradient flow (no curvature, no BVP)
  & Identification \\
\midrule
Universal approximation    & Thm~\ref{thm:universal}: $O(2)$ interpolants dense in $C^1(M)$, $M$ compact
  & \textbf{Proved} \\
Topological obstruction    & Non-trivial Euler/characteristic class of $E$;
  two-tier structure ($H^1(M,\mathbb{Z}_2)$ + higher classes) in \cite{Vasii2026Paper2}
  & \S\ref{sec:sphere} \\
Attention mechanism        & Curvature 2-form $F_A$ of the YMH connection
  & \cite{Vasii2026Paper2} \\
\bottomrule
\end{tabularx}
\end{center}

\medskip
The entries marked \textbf{Proved} are established within this paper or follow
directly from classical mathematics applied to the framework. The key structural
point of the table is the separation into two independent groups: the fiber
geometry (structure group $G$, replacing activations) and the base geometry
(Riemannian metric $g$, replacing the kernel). These two choices are orthogonal
--- any $G$ can be combined with any $g$ --- and both were previously made
arbitrarily in practice. The framework gives each a precise geometric meaning.

% ================================================================
\section{Open Conjecture and Open Problems}\label{sec:open}
% ================================================================

We distinguish between the one remaining main conjecture --- the genuine
open mathematical core of the framework --- and secondary open problems that arise
naturally but whose resolution is not essential to the framework.

\subsection{Note on geometric universality}

Geometric universality is proved as Theorem~\ref{thm:universal} in
\S\ref{sec:hierarchy}. Steps~1--2 of the proof are the universal kernel
theory of Steinwart \cite{steinwart2001} and Micchelli--Xu--Zhang
\cite{micchelli2006}: density of the Green's function span in $C^1(M)$ for
$\nu > n/2+1$ via Sobolev embedding. The $O(2)$ section construction
(Steps~5--6) is the geometric packaging specific to this paper. The theorem
should be read as a geometric restatement of universal kernel theory, not
a new density result.

\subsection{The remaining main conjecture}

\begin{conjecture*}[Symmetry efficiency, Conjecture~\ref{conj:symmetry}]
Let $G_{\mathrm{data}} \subseteq G$ act on $M$. For the continuous boundary
approximation problem with $G$-\emph{equivariant} sections (Definition~\ref{def:mstar}),
the minimal fiber dimension satisfies $m^*(G_{\mathrm{data}},\varepsilon) \leq
m^*(G,\varepsilon)$ for all $\varepsilon > 0$. The equivariance constraint is
essential: without it, larger $G$ trivially gives smaller $m^*$ by inclusion,
reversing the inequality.
\end{conjecture*}

\noindent The conjecture is non-trivial for continuous boundaries; for finite
data it is vacuous by Theorem~\ref{thm:flat} ($m^* = 2$ in both cases).
This is the precise geometric formulation of why equivariant networks are
parameter-efficient: $G$-equivariance is a stronger constraint than
$G_{\mathrm{data}}$-equivariance when $G \supsetneq G_{\mathrm{data}}$,
reducing the candidate space and potentially forcing larger fiber dimension.

\subsection{Secondary open problems}

\begin{enumerate}[label=\textbf{Q\arabic*.}, leftmargin=*, itemsep=6pt]

\item \textbf{Non-abelian exact solutions.}
The abelian ($G = \mathbb{R}^+$) and $O(2)$ cases are solved exactly. For $G = O(m)$
with $m \geq 3$, the equations are non-abelian and require the ADHM construction
\cite{atiyah1978} or Nahm transform. Do the resulting classifiers correspond to known architectures?

\item \textbf{TDA pipeline: completeness.}
Heuristic~\ref{prop:tda-lower} gives only a rough bound on the topological data
$G$ must carry. Can it be made a theorem? How should non-abelian $\pi_1$ and torsion
in $H_1$ be incorporated?

\item \textbf{Curvature and generalisation.}
The $L$-harmonic interpolant and the flat $O(2)$ solution (Theorem~\ref{thm:flat})
give different decision boundaries. Does the selected (stage-(ii) minimising)
solution generalise better? The Green's function regularisation corresponds to
Sobolev-norm regularisation; its relationship to weight decay and Tikhonov
regularisation should be made precise. This question connects to the neural
tangent kernel literature \cite{jacot2018ntk}: NTK theory shows that wide
networks trained by gradient descent converge to kernel ridge regression with
a specific kernel. The geometric framework suggests that this kernel has a
Green's function interpretation and that the choice of optimiser (gradient
descent vs direct solve) determines which geometric object is computed.

\item \textbf{Topological invariants and learning.}
For $m \geq 3$, different solutions have different topological charge (degree of
$s: M \to S^{m-1}$). Can classifiers with the same decision boundary but
different topological charge have different generalisation behaviour?

\item \textbf{Unified energy functional.}
The geometric energy $\mathcal{E}[\Sig, f]$ takes two forms: the Yang-Mills
energy $\int\|F_\Sig\|^2$ (energy of the \emph{connection}) in the non-abelian
case, and the Sobolev energy $\langle f, Lf\rangle = \|f\|^2_{H^\nu}$ (energy of the
\emph{section as a map} $f: M \to \R$) in the abelian/flat case --- and these
live on genuinely different objects. The standard Yang--Mills--Higgs functional
$\int(\|F_\Sig\|^2 + \|\nabla_\Sig s\|^2)$ is gauge-invariant but
vanishes identically for parallel sections ($\nabla_\Sig s = 0$ by
definition), so it does not distinguish $L$-harmonic interpolants from other flat
solutions. The Weyl decomposition $R = W + E + S$ \cite{besse1987}, where
different tensor pieces capture curvature at different geometric levels, is an
analogy for this hoped-for structure. This question is resolved in the companion
paper \cite{Vasii2026Paper2}: the Yang--Mills--Higgs functional
$\int\|F_A\|^2 + \int\|D_A\phi\|^2$, where $D_A\phi$ is the covariant
derivative of the section (not the connection-covariant derivative of a horizontal
section), recovers the Sobolev energy $\langle f,Lf\rangle$ in the flat abelian
limit (where $D_A = d$ on sections) and the Yang--Mills energy in the non-abelian
regime. The two limits do not coincide because in this paper sections are
horizontal ($D_A\phi = 0$), while in the companion paper sections are
covariantly harmonic ($D_A^*D_A\phi = 0$) --- the more general variational
condition.

\item \textbf{Topological obstructions.}
Is there a necessary condition for the existence of a solution, stated in terms
of characteristic classes of $E$? This would be a geometric analogue of the VC
dimension: a topological obstruction to learnability rather than a sample
complexity bound.

\item \textbf{Holonomy and generalisation.}
Is there a relationship between $\Hol(\nabla) \subseteq G$ and generalisation?
Smaller holonomy corresponds to simpler connection geometry and might correspond
to better generalisation in the spirit of minimum description length.

\end{enumerate}

% ================================================================
\section{Conclusion}
% ================================================================

We have proposed a geometric reformulation of binary classification. The central
claim is a precise \textbf{dictionary} between machine learning and differential
geometry (\S\ref{sec:dictionary}): classifiers are parallel sections of vector
bundles, training labels are Dirichlet conditions, the RKHS kernel is the Green's
function of an elliptic operator, and backpropagation is a flat-geometry limit of
an exact geometric problem. The framework does not add new Yang-Mills theory --- it
identifies that the equations already used in machine learning are classical
potential theory and harmonic analysis, expressed in an unfamiliar language.

We are explicit about what the geometry does and does not do here. In
\S\ref{sec:xor}--\S\ref{sec:curvature} the connection is derived from the section
and carries no independent information, so results such as
Theorem~\ref{thm:flat} are easy for structural rather than deep reasons
(Remark~\ref{rem:connection-derived}). The geometry becomes load-bearing in the
topological obstructions over non-contractible bases (\S\ref{sec:sphere}) and in
the energy selection that yields the Green's function and capacitance picture
(\S\ref{sec:yang-mills}). Promoting the connection to an independent field with
its own energy and its own topology --- where the gauge theory genuinely does
work --- is the subject of the companion paper \cite{Vasii2026Paper2}.

The contribution has three layers. The \textbf{first} is the dictionary itself:
the identification that RKHS interpolation is the abelian $L$-harmonic interpolant
(Theorem~\ref{thm:svm-ym} --- a geometric restatement of Kimeldorf--Wahba
\cite{kimeldorf1971some} and the SPDE correspondence
\cite{lindgren2011spde}); the hard-margin SVM is the same interpolant
restricted to its endogenously determined support vectors
(Corollary~\ref{cor:svm-ym}); and the decision boundary is the zero
equipotential of a classical Green's function field. The dictionary gives these
known analytic facts a unified geometric interpretation they did not previously
have. The \textbf{second} is the existence and density theory: every finite
classification problem on any smooth manifold has an exact flat $O(2)$ solution
(Theorem~\ref{thm:flat} --- new); the density of $O(2)$ $L$-harmonic interpolants
in $C^1(M)$ (Theorem~\ref{thm:universal}) is universal kernel theory
\cite{steinwart2001,micchelli2006} recast in geometric language. Curvature is
topologically forced on non-contractible spaces (\S\ref{sec:sphere}).
The \textbf{third} is the data-driven geometry: persistent homology as a
principled guide to the structure group (\S12), replacing an arbitrary choice of
activation function with a computable topological invariant.

A numerical experiment (\S\ref{sec:experiment}) confirms the theoretical picture:
the geometric solution is an exact interpolant obtained from a single linear
system solve, while the neural network requires $5{,}000$ gradient steps without
reaching the convergence criterion. Both achieve high test accuracy on a
two-moons dataset; the comparison is about method, not outcome.

The one open mathematical problem that would complete the framework is the symmetry
efficiency conjecture (Conjecture~\ref{conj:symmetry}): a representation-theoretic
statement that matching the structure group to the data symmetry group minimises
the required fiber dimension. Proving this conjecture would make the dictionary
complete and actionable.

\bigskip\bigskip

\noindent\textbf{Acknowledgements.}
The author thanks [to be completed] for helpful discussions.

\noindent\textbf{Use of AI tools.}
Claude Sonnet (Anthropic) was used during the preparation of this manuscript,
including for code generation, LaTeX editing, and discussion of mathematical
exposition. The author has reviewed all content and takes full responsibility
for any errors that remain.

\end{document}